\newtheorem{theorem}{Theorem}[section]
\newtheorem{lemma}[theorem]{Lemma}
\theoremstyle{definition}
\newtheorem{definition}[theorem]{Definition}
\theoremstyle{remark}
\newtheorem{remark}[theorem]{Remark}
\newcommand{\Ind}{\mathrm{Ind}}
\begin{document}

\title{Independence Complexes of Stable Kneser Graphs}

\author{Benjamin Braun}
\address{Department of Mathematics,
University of Kentucky, Lexington, KY 40506}
\email{benjamin.braun@uky.edu}

\thanks{The author was partially supported by the NSF through award DMS-0758321.  Thanks to John Shareshian for thoughtful discussions at the beginning of this project.}

\subjclass[2000]{Primary 05C69, Secondary 57M15}

\date{\today}

\keywords{Stable Kneser graphs, independent set, independence complex, homotopy type, discrete Morse theory}

\begin{abstract}
For integers $n\geq 1$, $k\geq 0$, the \emph{stable Kneser graph} $SG_{n,k}$ (also called the \emph{Schrijver graph}) has as vertex set the stable $n$-subsets of $[2n+k]$ and as edges disjoint pairs of $n$-subsets, where a stable $n$-subset is one that does not contain any $2$-subset of the form $\{i,i+1\}$ or $\{1,2n+k\}$.  The stable Kneser graphs have been an interesting object of study since the late 1970's when A. Schrijver determined that they are a vertex critical class of graphs with chromatic number $k+2$.  This article contains a study of the independence complexes of $SG_{n,k}$ for small values of $n$ and $k$.  Our contributions are two-fold: first, we find that the homotopy type of the independence complex of $SG_{2,k}$ is a wedge of spheres of dimension two.  Second, we determine the homotopy types of the independence complexes of certain graphs related to $SG_{n,2}$.
\end{abstract}

\maketitle

% Body of text  %%%%%%%%%%%%%%%%%%%%%%%%%%%%%%%%%%%%%%%

\section{Introduction}

Let $[n]:=\{1,2,3,\ldots,n\}$ and consider the following family of graphs.

\begin{definition}  For each pair of integers $n\geq 1$, $k\geq 0$, the \textit{Kneser graph} $KG_{n,k}$ has as vertices the $n$-subsets of $[2n+k]$ with edges defined by disjoint pairs of $n$-subsets.  For the same parameters, the \textit{stable Kneser graph} $SG_{n,k}$, also called the \emph{Schrijver graph}, is the subgraph of $KG_{n,k}$ induced by the \textit{stable} $n$-subsets of $[2n+k]$, i.e. those $n$-subsets that do not contain any $2$-subset of the form $\{i,i+1\}$ or $\{1,2n+k\}$.
\end{definition}

The Kneser and stable Kneser graphs have interesting properties related to independent sets of vertices, where a collection of vertices in a graph $G$ is an \textit{independent set} if the vertices are pairwise non-adjacent in $G$.  Perhaps the most widely-known structure in graph theory related to independent sets is that of a \emph{proper graph coloring}, i.e. a partition of the vertices of $G$ into disjoint independent sets.  The minimal number of independent sets required for such a partition is called the \emph{chromatic number} of $G$ and is denoted $\chi(G)$.  In 1978, L.~Lov\'{a}sz proved in \cite{LovaszChromaticNumberHomotopy} that $\chi(KG_{n,k})=k+2$ by using an ingenious application of the Borsuk-Ulam theorem, thus verifying a conjecture due to M.~Kneser from $1955$.  Shortly afterwards, A.~Schrijver determined in \cite{SchrijverGraphs} that $\chi(SG_{n,k})=\chi(KG_{n,k})$, again using the Borsuk-Ulam theorem.  Schrijver also proved that the stable Kneser graphs are vertex critical, i.e. the chromatic number of any subgraph of a stable Kneser graph $SG_{n,k}$ obtained by removing vertices is strictly less than $\chi(SG_{n,k})$.  These theorems were one source of inspiration for subsequent work involving the interaction of combinatorics and algebraic topology, see \cite{JonssonBook,KozlovBook,MatousekBorsukUlam} for recent textbook accounts of further developments.

Recall that an (abstract) simplicial complex $\Delta=(V,\mathcal{F})$ is a finite set $V$, called the \emph{vertices} of $\Delta$, together with a collection of subsets $\mathcal{F}\subseteq 2^{V}$ such that \[F\in \mathcal{F},G\subseteq F \Rightarrow G\in \mathcal{F},\] called the \emph{faces} of $\Delta$.  For technical purposes, we include the emptyset as a face.  Lov\'{a}sz's original proof that $\chi(KG_{n,k})=k+2$ followed from a general theorem bounding $\chi(G)$ from below by a function of the connectivity of the \emph{neighborhood complex} of $G$, the complex whose vertices are the vertices of $G$ and whose faces are vertices sharing a common neighbor.  In \cite{BjornerDeLongueville}, Bj\"{o}rner and De Longueville proved that the neighborhood complex of $SG_{n,k}$ is homotopy equivalent to a $k$-sphere, implying that $SG_{n,k}$ is well-behaved topologically with regard to this construction and Lov\'{a}sz's theorem.  While the neighborhood complex plays a fundamental role in providing topological lower bounds on chromatic numbers, this is not the only topological construction that investigates independence structures.  If one is interested in the interplay among all the independent sets in $G$, without regard to chromatic numbers, one is led to the following construction.  

\begin{definition} Let $G=(V,E)$ be a graph.  The \textit{independence complex} of $G$, $\Ind(G)$, is the simplicial complex with vertex set $V$ and faces given by independent sets.
\end{definition}

Independence complexes have been the subject of recent investigation, see for example \cite{LinussonGridGraphs,EhrenborgHetyei,EngstromClawFree,EngstromWitten,JonssonHardSquares,Thapper}.  Five of these papers involve a connection between independence complexes of graphs and hard squares models in statistical mechanics.  Additionally, the homotopy type of the independence complexes of cycles played a critical role in the recent resolution by E.~Babson and D.~Kozlov in \cite{BabsonKozlovLovaszConjecture} of Lov\'{a}sz's conjecture regarding odd cycles and graph homomorphism complexes. 

Our purpose in this paper is to investigate the homotopy type of the independence complexes of the stable Kneser graphs $SG_{2,k}$ and the independence complexes of a family of graphs related to $SG_{n,2}$.  There are several reasons to be curious about the independence complex of $SG_{n,k}$.  Since the stable Kneser graphs are vertex-critical, they are a minimal obstruction to colorability in the Kneser graphs, and the chromatic number inherently measures some restricted behavior of independent sets which is reflected in the neighborhood complex.  It is of interest to see what, if any, properties of independent sets in $SG_{n,k}$ are exposed through $\Ind(SG_{n,k})$.  Also, an independent set in $SG_{n,k}$ is a pairwise intersecting family of stable $n$-subsets of $[2n+k]$.  Such families have been previously studied from an extremal perspective, see for example \cite{Talbot} and the references therein.

The homotopy types of the independence complexes of some stable Kneser graphs are already known.  For $n=1$, the stable Kneser graphs are complete graphs and thus their independence complexes are wedges of $0$-dimensional spheres.  For $k=0$, the stable Kneser graphs are complete graphs on two vertices, hence their independence complexes are zero dimensional spheres.  For $k=1$, it is easy to see that $SG_{n,1}=C_{2n+1}$, the cycle of odd length, and the homotopy types of independence complexes of cycles are known.

\begin{theorem}\label{cycle}{\rm(Kozlov, \cite{KozlovIndCycles})} For $n\geq 1$, let $C_n$ denote the cycle of length $n$.  The following homotopy equivalence holds:
\[\Ind(C_n)\simeq \left\{ \begin{array}{ll}
S^{r-1}\bigvee S^{r-1} & if\phantom{1} n=3r \\
S^{r-1} & if\phantom{1} n=3r\pm 1
\end{array} \right.
\]
\end{theorem}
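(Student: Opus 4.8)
The plan is to compute $\Ind(C_n)$ from the standard ``star decomposition'' of an independence complex, combined with an induction that first settles the case of paths. For any graph $G$ and any vertex $v$, the closed star of $v$ in $\Ind(G)$ is $\{v\}*\Ind(G\setminus N[v])$, a cone and hence contractible; since $\Ind(G)$ is the union of this star with $\Ind(G\setminus v)$, and the two subcomplexes meet exactly in $\Ind(G\setminus N[v])$, it follows that $\Ind(G)$ is homotopy equivalent to the mapping cone of the simplicial inclusion $\iota\colon\Ind(G\setminus N[v])\hookrightarrow\Ind(G\setminus v)$. (Simplicial inclusions are cofibrations, so the relevant homotopy pushout is the honest pushout.) I will extract two consequences to use repeatedly: (a) if the target $\Ind(G\setminus v)$ is contractible, collapsing it gives $\Ind(G)\simeq\Sigma\,\Ind(G\setminus N[v])$; and (b) if $\iota$ is null-homotopic — in particular whenever the source $\Ind(G\setminus N[v])$ is contractible — then $\Ind(G)\simeq\Ind(G\setminus v)\vee\Sigma\,\Ind(G\setminus N[v])$.

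First I would dispose of paths. Let $P_m$ denote the path on $m$ vertices. Taking $v$ to be the unique neighbor of a leaf, deleting $v$ leaves that leaf isolated, so $\Ind(P_m\setminus v)$ is a cone and hence contractible, while $P_m\setminus N[v]=P_{m-3}$ for $m\ge 3$. By consequence (a), $\Ind(P_m)\simeq\Sigma\,\Ind(P_{m-3})$ for $m\ge 3$; together with the base values $\Ind(P_0)=S^{-1}$ (the void complex), $\Ind(P_1)\simeq\mathrm{pt}$, and $\Ind(P_2)\simeq S^{0}$, iterated suspension shows that $\Ind(P_m)$ is a genuine sphere $S^{\lfloor(m-1)/3\rfloor}$ when $m\not\equiv 1\pmod 3$ and is contractible when $m\equiv 1\pmod 3$. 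That these are bona fide homotopy spheres, not merely homology spheres, matters for the last step.

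Now take $n\ge 3$ and apply the star decomposition of $\Ind(C_n)$ at any vertex $v$: one has $C_n\setminus v=P_{n-1}$ and $C_n\setminus N[v]=P_{n-3}$, so $\Ind(C_n)$ is the mapping cone of $\Ind(P_{n-3})\hookrightarrow\Ind(P_{n-1})$. Split by the residue of $n$ modulo $3$ and feed in the path computation. If $n\equiv 2\pmod 3$, say $n=3r-1$, then $\Ind(P_{n-1})$ is contractible, so (a) gives $\Ind(C_n)\simeq\Sigma\,\Ind(P_{n-3})\simeq\Sigma S^{r-2}=S^{r-1}$. If $n\equiv 1\pmod 3$, say $n=3r+1$, then $\Ind(P_{n-3})$ is contractible, so (b) gives $\Ind(C_n)\simeq\Ind(P_{n-1})\vee\Sigma(\mathrm{pt})\simeq\Ind(P_{n-1})\simeq S^{r-1}$. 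If $n\equiv 0\pmod 3$, say $n=3r$, then $\Ind(P_{n-3})\simeq S^{r-2}$ and $\Ind(P_{n-1})\simeq S^{r-1}$; since $r-2<r-1$ every map $S^{r-2}\to S^{r-1}$ is null-homotopic, so (b) gives $\Ind(C_n)\simeq S^{r-1}\vee\Sigma S^{r-2}=S^{r-1}\vee S^{r-1}$.

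There is no single hard step here; the real work is careful bookkeeping. The main things to watch are the small and degenerate instances — for tiny $n$ the symbols $P_{-1}$ and $P_0$ can appear and must be interpreted correctly, the cases $n=3,4,5$ should be checked by hand against the formula, and $S^{-1}$ must be read as the void complex — and, most importantly, the fact that $\Ind(P_m)$ is a genuine sphere obtained by iterated suspension. It is precisely this last point that licenses the dimension count $\pi_{r-2}(S^{r-1})=0$ in the $n\equiv 0\pmod 3$ case; without it one would recover only the homology of $\Ind(C_{3r})$ and would need extra work to pin down the homotopy type. An alternative route, closer to the cited source, is to build an explicit acyclic matching on the face poset of $\Ind(C_n)$ and invoke discrete Morse theory; this sidesteps the homotopy-sphere subtlety but replaces it with verifying that the matching is acyclic and identifying the critical cells.
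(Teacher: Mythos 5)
Your proof is correct, and it is worth noting that the paper does not actually prove Theorem~\ref{cycle}—it is cited from Kozlov—so the relevant comparison is with the paper's overall methodology, which is discrete Morse theory via matching trees (the paper's Lemma~\ref{cyclelemma} is the matching-tree restatement of the same result, and Remark~5.5 explicitly acknowledges that the original proofs proceed differently). Your argument is the fold-lemma/cofiber-sequence approach: identify $\Ind(G)$ with the mapping cone of $\Ind(G\setminus N[v])\hookrightarrow\Ind(G\setminus v)$, establish the path recursion $\Ind(P_m)\simeq\Sigma\,\Ind(P_{m-3})$, and then run the three residues of $n$ modulo $3$ through the two degeneration lemmas. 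The bookkeeping is right, the small cases ($n=3,4,5$ and the reading of $S^{-1}$ as the irrelevant complex) are handled consistently, and you correctly flag the one substantive point: for $n=3r$ the inclusion $S^{r-2}\hookrightarrow S^{r-1}$ is null-homotopic by the dimension count $\pi_{r-2}(S^{r-1})=0$, which requires having shown $\Ind(P_{n-3})$ is a genuine sphere and not merely a homology sphere—your iterated-suspension setup supplies exactly that. The trade-off between the two routes is real: your cofiber argument is more conceptual, generalizes to other graph families, and needs the homotopy-group input; the matching-tree argument the paper favors is more combinatorial and constructive, produces an explicit Morse complex with cells concentrated in a single dimension, and so extracts the wedge-of-spheres conclusion directly from Remark~\ref{sphereremark} without any appeal to $\pi_*$—but it requires verifying acyclicity of the matching. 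Both are standard and correct; yours is closer to the cited original.
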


Our first contribution is to describe $\Ind(SG_{2,k})$ up to homotopy.

\begin{theorem}\label{n=2}
For $k\geq 4$, \[ \Ind(SG_{2,k})\simeq \bigvee_{\substack{\frac{(k-3)(k-1)(k+4)}{6}-1}}S^2. \]
Also,
\[ \Ind(SG_{2,2})\simeq S^1 \bigvee S^1\]
and \[\Ind(SG_{2,3})\simeq S^1.\]
\end{theorem}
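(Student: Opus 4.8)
The plan is to prove that $\Ind(SG_{2,k})$ is homotopy equivalent to the $2$-skeleton of $\Ind(C_{k+4})$, and then to read off all three cases from Kozlov's Theorem~\ref{cycle} together with one Euler-characteristic count. Throughout write $m:=2\cdot 2+k=k+4$, so the vertices of $SG_{2,k}$ are the stable $2$-subsets of $[m]$ and an independent set of $SG_{2,k}$ is exactly a pairwise-intersecting family of such subsets.

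First I would record the face structure. A pairwise-intersecting family of $2$-sets is, by an elementary argument (fix a member $\{a,b\}$; either all members contain $a$, or all contain $b$, or some $\{a,c\}$ and $\{b,d\}$ in the family meet and force $c=d$), either contained in some star $S_v:=\{\,\{v,w\}:\{v,w\}\text{ stable}\,\}$ or equal to a triangle $\{\{a,b\},\{b,c\},\{a,c\}\}$ on an independent triple $\{a,b,c\}$ of $C_m$. Hence every maximal face of $\Ind(SG_{2,k})$ is a simplex $S_v$ or a triangular $2$-face, and since any two vertices of a triangle already lie in a common star, the triangles contribute only genuine $2$-faces. Thus $\Ind(SG_{2,k})=X\cup(\text{triangular }2\text{-cells})$, where $X:=\bigcup_{v\in[m]}S_v$ is the subcomplex of faces with a common element and each triangular $2$-cell is attached along the boundary circuit of its triangle, a circuit lying in $X$.

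Next I would identify $X$ up to homotopy. Each $S_v$ is a simplex, and for $t\ge 2$ the intersection $S_{v_1}\cap\cdots\cap S_{v_t}$ is a single vertex (when $t=2$ and $\{v_1,v_2\}$ is stable) or empty, so all nonempty intersections are contractible; the Nerve Lemma gives $X\simeq\overline{C_m}$, the complement of $C_m$ viewed as a $1$-dimensional complex. Moreover the nerve equivalence can be chosen so that the edge of $X$ joining $\{a,b\}$ to $\{b,c\}$, whose interior lies in $S_b$ alone, is carried to a path through the vertex $b$; concatenating the three edges of the triangle on $\{a,b,c\}$ shows its boundary circuit is carried to the $3$-cycle $a\,b\,c$ of $\overline{C_m}$. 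Since the $3$-cycles of $\overline{C_m}$ are precisely the boundaries of the $2$-faces of $\Ind(C_m)$ (the clique complex of $\overline{C_m}$), the gluing lemma for attaching cells along homotopic maps yields, for every $k\ge 2$,
\[ \Ind(SG_{2,k})\ \simeq\ \Ind(C_m)^{(2)},\qquad m=k+4, \]
the $2$-skeleton of $\Ind(C_m)$.

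Finally I would finish by cases. For $k=2$ and $k=3$ the complex $\Ind(C_m)$ ($m=6,7$) is already at most $2$-dimensional, so $\Ind(C_m)^{(2)}=\Ind(C_m)$ and Theorem~\ref{cycle} gives $S^1\vee S^1$ and $S^1$. For $k\ge 4$ we have $m\ge 8$, so Theorem~\ref{cycle} gives $\Ind(C_m)\simeq\bigvee S^{r-1}$ (one or two copies) with $r\ge 3$; hence $\Ind(C_m)$, and therefore its $2$-skeleton, is simply connected. A simply connected $2$-dimensional complex is homotopy equivalent to a wedge of $\chi-1$ copies of $S^2$, and counting the independent sets of $C_m$ of sizes $1,2,3$ gives
\[ \chi\bigl(\Ind(C_m)^{(2)}\bigr)=m-\Bigl(\tbinom{m}{2}-m\Bigr)+\frac{m(m-4)(m-5)}{6}=\frac{(k-3)(k-1)(k+4)}{6}, \]
after substituting $m=k+4$ and simplifying; subtracting $1$ yields the asserted number of spheres. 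The step I expect to be the real obstacle is the verification in the third paragraph that the Nerve-Lemma equivalence $X\simeq\overline{C_m}$ sends each triangle boundary to the corresponding $3$-cycle, so that one genuinely recovers $\Ind(C_m)^{(2)}$ rather than some other complex built from $\overline{C_m}$ by attaching $2$-cells; an alternative that avoids this is to construct an explicit discrete Morse matching on $\Ind(SG_{2,k})$ collapsing the stars, leaving one critical vertex and one critical $2$-cell per ``excess'' triangle.
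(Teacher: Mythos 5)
Your proposal is correct in outline and takes a genuinely different route from the paper. The paper identifies the same two families of maximal faces (the stars, which it calls ``wheels'' $W_i$, and the triangles $T_{i,j,h}$), but then builds an explicit discrete Morse matching in two layers, via order-preserving maps $\phi$ and $\psi$ to chains, showing that every critical cell lives in dimension $2$ (for $k\geq 4$) and counting them directly; the cases $k=2$ and $k=3$ are handled by separate ad hoc arguments. You instead package everything into the single homotopy equivalence $\Ind(SG_{2,k})\simeq\Ind(C_{k+4})^{(2)}$ via the Nerve Lemma on the star cover plus a gluing argument, and then invoke Kozlov's theorem, simple-connectivity of the $2$-skeleton, the fact that a simply connected $2$-complex is a wedge of $\chi-1$ two-spheres, and an Euler-characteristic count that correctly reproduces $\frac{(k-3)(k-1)(k+4)}{6}$. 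The trade-off is clear: the paper's matching argument is entirely elementary and self-contained but requires verifying acyclicity and completeness of a somewhat intricate matching; your argument is shorter and more conceptual, uniformly explaining all three cases, but leans on heavier topological machinery (Nerve Lemma, Hurewicz, Whitehead) and on the step you yourself flag --- that the nerve equivalence carries each triangle boundary to the corresponding $3$-cycle of $\overline{C_m}$, so that the attached complex really is $\Ind(C_m)^{(2)}$ and not some other two-complex over $\overline{C_m}$. That step is genuinely the crux, and while I believe it is true (the open $1$-cell joining $\{a,b\}$ to $\{b,c\}$ lies only in $S_b$, so under the standard partition-of-unity realization of the nerve map it is carried into the star of the vertex $b$, and tracking the three sides of the triangle gives a loop homotopic to $abc$), a clean writeup would either spell out the nerve map explicitly or, as you suggest, replace the Nerve Lemma step by a small discrete Morse collapse of each star onto a spanning tree of its $1$-skeleton, which sidesteps the issue entirely. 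With that step filled in, the proof is complete and correct.
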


Our second contribution is to investigate $\Ind(SG_{n,2})$.  Unfortunately, as will be discussed in Section~\ref{SGn2}, these complexes are more complicated than those where $n=2$, and their homotopy type is still unknown.  However, we will be able to determine the homotopy type of a class of graphs we call $E_{2n+2}$, close relatives of $SG_{n,2}$ that will be fully defined in Section~\ref{SGn2}.  A rough description of $SG_{n,2}$ is as a cylinder graph with some additional edges on the cycles forming the ends of the cylinder; $E_{2n+2}$ is then obtained by ``squeezing'' $SG_{n,2}$ so that only the end cycles and additional edges remain.

\begin{theorem}\label{E2n} Let $n\geq 3$.  If $4 \mid 2n+2$, i.e. $n$ is odd, then
\[ \Ind(E_{2n+2}) \simeq \left\{ \begin{array}{ll}
S^{2k+1} \bigvee S^{2k+1} \bigvee S^{2k+1} & if\phantom{1} n=4k+1 \\
S^{2k+2} & if\phantom{1} n=4k+3
\end{array} \right. .
\]
If $4 \nmid 2n+2$, i.e. $n$ is even, then
\[ \Ind(E_{2n+2}) \simeq \left\{ \begin{array}{ll}
S^{2k} & if\phantom{1} n=6k \\
S^{2k+1}\bigvee S^{2k+1} & if\phantom{1} n=6k+2 \\
S^{2k+2} & if\phantom{1} n=6k+4
\end{array} \right. .
\]

\end{theorem}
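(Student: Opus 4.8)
The plan is to induct on $n$, using the two standard structural tools for independence complexes. The first is the \emph{fold lemma}: if $u \neq v$ are vertices of a graph $G$ with $N_G(u) \subseteq N_G(v)$, then $\Ind(G) \simeq \Ind(G \setminus v)$. The second is the cofiber sequence associated to deleting a vertex $v$,
\[ \Ind(G \setminus N[v]) \hookrightarrow \Ind(G \setminus v) \longrightarrow \Ind(G), \]
which realizes $\Ind(G)$ as $\Ind(G \setminus v)$ with the cone $v * \Ind(G \setminus N[v])$ glued along $\Ind(G \setminus N[v])$; in particular, when the connecting map $\Ind(G\setminus N[v]) \to \Ind(G\setminus v)$ is null-homotopic this gives $\Ind(G) \simeq \Ind(G\setminus v) \vee \Sigma\, \Ind(G \setminus N[v])$. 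Since the target homotopy types are wedges of spheres of dimension at least $2$ throughout the range $n \geq 3$, the complexes that occur are simply connected, so it will be enough to track reduced homology — which pins down the sphere dimensions, with the number of spheres then forced by an Euler-characteristic (independence-polynomial-at-$-1$) recursion — together with the homotopy class of a single attaching map in the one delicate case. Equivalently, and this is the form I expect the cleanest write-up to take, one can run the entire computation through explicit acyclic matchings in the sense of discrete Morse theory.

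First I would extract from Section~\ref{SGn2} the precise chord structure of $E_{2n+2}$: it is the cycle on vertices $1, \dots, 2n+2$ together with the extra edges inherited from the two ``end cycles'' of $SG_{n,2}$, and this added structure depends on $2n+2$ only through its residue modulo $4$ — which is exactly the origin of the odd/even split in the statement. I would then choose a distinguished vertex $v$ lying on a chord, apply the fold lemma to collapse every vertex that becomes dominated in $G \setminus v$ and in $G \setminus N[v]$, and identify the two reduced graphs. I anticipate that $G \setminus N[v]$ reduces to a disjoint union of paths and short cycles, so that $\Ind(G\setminus N[v])$ is a join of spheres — hence itself a single sphere — computed from Theorem~\ref{cycle} together with the analogous (standard) homotopy types of the independence complexes of paths; and that $G \setminus v$ reduces either to the cycle $C_{n+1}$ or to a smaller member of the $E$-family. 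For $n$ even I expect this to collapse, essentially in one step, to the clean identity $\Ind(E_{2n+2}) \simeq \Sigma\, \Ind(C_{n+1})$, which via the residue of $n+1$ modulo $3$ and Theorem~\ref{cycle} reproduces precisely the three even cases of the theorem. For $n$ odd I expect instead a double-suspension recursion decreasing $n$ by $4$, seeded by the explicitly computed base cases $E_8$ and $E_{12}$, which accounts for the period-$4$ pattern.

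The main obstacle is the wedge of \emph{three} equidimensional spheres occurring when $n \equiv 1 \pmod 4$. A wedge of three spheres of dimension $d \geq 1$ has $H_d$ of rank three, so it cannot arise by suspending any space of total Betti number two; hence at the step where it appears one must show that the connecting map $\Ind(G\setminus N[v]) \to \Ind(G\setminus v)$ — a map from a $(d-1)$-sphere into $S^d \vee S^d$ — is null-homotopic, making the cofiber $S^d \vee S^d \vee S^d$ rather than a complex with a nontrivial cell attachment. I would try to prove this nullhomotopy by a direct geometric argument, exhibiting the image sphere inside a contractible subcomplex of $\Ind(G\setminus v)$ so that it bounds there; failing a clean such argument, I would bypass the pushout analysis entirely and build, uniformly within the residue class, an explicit acyclic matching on $\Ind(E_{2n+2})$ whose critical cells are exactly the three (respectively one, two) top-dimensional cells demanded by the statement, then verify directly that the Morse boundary vanishes. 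Constructing that matching and checking acyclicity is, I expect, the technical heart of the proof; the remaining work is bookkeeping of residues modulo $4$ and modulo $6$ and the Euler-characteristic counts reproducing the stated numbers of spheres.
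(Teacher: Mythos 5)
Your outline correctly identifies discrete Morse theory as a workable tool and correctly flags the $n \equiv 1 \pmod 4$ case (wedge of three equidimensional spheres) as the structural obstacle — both good instincts, and indeed the paper proves the theorem by building explicit acyclic matchings via matching trees. However, as written this is a proposal for two possible proofs, not a proof, and both branches have unresolved gaps.

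Your Plan A (fold lemma plus the cofiber sequence for vertex deletion) rests on the claimed identity $\Ind(E_{2n+2}) \simeq \Sigma\,\Ind(C_{n+1})$ for $n$ even, which you assert would ``collapse in one step.'' This identity is numerically consistent with the theorem (checking residues of $n+1$ mod $3$ against Theorem~\ref{cycle} reproduces the three even cases), but you do not identify a vertex $v$ that realizes it, and the natural candidates do not obviously work: deleting a bipartite-side vertex such as $1$ leaves $G\setminus N[1]$ a comb graph whose independence complex is contractible, which yields nothing; deleting a cycle vertex $c_1$ leaves $G\setminus N[c_1]$ as a path with pendants hanging off a $K_{n,n}$, which does not visibly fold to $C_{n+1}$. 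The paper, for its part, never establishes this suspension identity; its matching tree for $n$ even eventually leaves residual graphs that are disjoint unions of \emph{paths}, handled by Lemma~\ref{pathlemma}, not a single odd cycle. Similarly your ``double-suspension recursion decreasing $n$ by $4$'' for $n$ odd is not actually set up — you would need to exhibit $E_{2n+2}$ reducing to $E_{2n-6}$ after two suspension steps, and the paper instead reduces to an unrelated auxiliary family of graphs $EL_r$ (introduced precisely for this purpose in Lemma~\ref{ELlemma}), whose Morse-theoretic period-4 behaviour is what drives the pattern. Your Plan B — ``build an explicit acyclic matching whose critical cells are exactly the three (resp.\ one, two) top cells'' — is essentially a description of what the paper does, but you acknowledge this remains to be constructed and call it ``the technical heart of the proof.'' That heart is precisely the content of Section~\ref{proofE2n}: a multi-page case analysis producing a matching tree whose leaf nodes reduce to $EL_r$, paths, or empty sets, together with the verification that the surviving critical cells all sit in a single dimension. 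Without constructing that matching (or carrying out the cofiber recursion with explicit vertex choices and fold reductions), the argument is incomplete.
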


It is interesting to compare the case in Theorem~\ref{E2n} where $n$ is even with Theorem~\ref{cycle}, as we will later see that when $n$ is even the graph $E_{2n+2}$ is closely related to an odd cycle of length $n+1$.

Our primary tool for proving these theorems is discrete Morse theory.  The remainder of the paper is structured as follows.  In Section~\ref{DMT}, we discuss the basics of discrete Morse theory, including the construction of acyclic partial matchings via \emph{matching trees} introduced in \cite{LinussonGridGraphs}.  In Section~\ref{proofn=2}, we prove Theorem~\ref{n=2}.  In Section~\ref{SGn2}, we provide an explicit description of the graphs $SG_{n,2}$ leading to the definition of $E_{2n+2}$ and remark on a connection between $\Ind(SG_{n,2})$ and \cite{JonssonHardSquares}.  Finally, in Section~\ref{proofE2n} we provide a proof of Theorem~\ref{E2n}.

%%%%%%%%%%%%%%%%%%%%%%%%%%%%%%%%%%%%%%%%%%%%%%%%%%%%%%%%

\section{Tools From Discrete Morse Theory}\label{DMT}

In this section we introduce the tools we need from discrete Morse theory.  Discrete Morse theory was introduced by R.~Forman in \cite{FormanMorseTheory} and has since become a standard tool in topological combinatorics.  The main idea of (simplicial) discrete Morse theory is to pair cells in a simplicial complex in a manner that allows them to be cancelled via elementary collapses, reducing the complex under consideration to a homotopy equivalent complex, cellular but possibly non-simplicial, with fewer cells.  Detailed discussions of the following definitions and theorems, along with their proofs, can be found in \cite{JonssonBook,KozlovBook}.

\begin{definition}\label{Morsedef}
A \textit{partial matching} in a poset $P$ is a partial matching in the underlying graph of the Hasse diagram of $P$, i.e. it is a subset $M\subseteq P\times P$ such that \begin{itemize}
\item $(a,b)\in M$ implies $b$ covers $a$, i.e. $a<b$ and no $c$ satisfies $a<c<b$, and
\item each $a \in P$ belongs to at most one element in $M$.
\end{itemize}
When $(a,b)\in M$ we write $a=d(b)$ and $b=u(a)$.  A partial matching on $P$ is called \textit{acyclic} if there does not exist a cycle \[b_1>d(b_1)<b_2>d(b_2)<\cdots<b_n>d(b_n)<b_1,\] with $n\geq 2$, and all $b_i\in P$ being distinct.
\end{definition}

Given an acyclic partial matching $M$ on $P$, we say that the unmatched elements of $P$ are \textit{critical}.  The following theorem asserts that an acyclic partial matching on the face poset of a polyhedral cell complex is exactly the pairing needed to produce our desired homotopy equivalence.

\begin{theorem}\label{mainmorse} {\rm (Main Theorem of Discrete Morse Theory)}
Let $\Delta$ be a polyhedral cell complex and let $M$ be an acyclic partial matching on the face poset of $\Delta$.  Let $c_i$ denote the number of critical $i$-dimensional cells of $\Delta$.  The space $\Delta$ is homotopy equivalent to a cell complex $\Delta_c$ with $c_i$ cells of dimension $i$ for each $i\geq 0$, plus a single $0$-dimensional cell in the case where the emptyset is paired in the matching.
\end{theorem}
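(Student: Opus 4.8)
The plan is to prove the theorem by absorbing the matched pairs through elementary collapses while attaching the critical cells one at a time, so that the resulting chain of homotopy equivalences and cell attachments builds a model $\Delta_c$ with exactly $c_i$ cells in each dimension $i$. The starting point, and the combinatorial heart of the argument, is to convert the acyclicity of $M$ into a linear order $\sigma_1 \prec \sigma_2 \prec \cdots \prec \sigma_N$ on the cells of $\Delta$ that simultaneously (i) refines the face order, so that every face of a cell precedes that cell, and (ii) places each matched pair $a = d(b)$ in two consecutive positions. To obtain such an order I would pass to the set of \emph{blocks} of $M$ --- the matched pairs $\{d(b),b\}$ together with the singleton critical cells --- and build the auxiliary digraph whose vertices are these blocks, with an arc from a block $B$ to a distinct block $B'$ whenever some cell of $B$ lies below some cell of $B'$ in the face poset; any topological sort of this digraph, with each pair-block expanded to the two cells $d(b) \prec b$, then supplies the order. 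The real content is that this digraph is acyclic, and I expect \emph{this} to be the essential difficulty: it is dispatched by unwinding a hypothetical directed cycle --- composing arcs past the critical singletons so that the cycle passes only through matched-pair blocks, then tracking which cell of each pair witnesses each arc --- into a chain of relations $b_i > d(b_i)$ and $d(b_i) < b_{i+1}$ that assembles into exactly a cycle of the form forbidden by Definition~\ref{Morsedef}. (An alternative route, closer to Forman's original treatment, replaces this linear order by a discrete Morse function on $\Delta$ and filters by its sublevel complexes, but the bookkeeping is the same.)

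With the order in hand, list the blocks $B_1, \ldots, B_m$ in the induced order and set $K_j := B_1 \cup \cdots \cup B_j$. Because $\prec$ refines the face order, each $K_j$ is a subcomplex of $\Delta$, and $K_0$ is empty (or just the empty face). If $B_j = \{\sigma\}$ is a critical block, then every proper face of $\sigma$ appears earlier, so $K_j = K_{j-1} \cup \sigma$ is obtained from $K_{j-1}$ by attaching a single cell of dimension $\dim \sigma$ along its boundary sphere; here the hypothesis that $\Delta$ is a polyhedral, hence regular, cell complex is precisely what guarantees that this attaching map is a homeomorphism onto $\partial \sigma \subseteq K_{j-1}$. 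If $B_j = \{a,b\}$ is a matched pair with $b$ covering $a$, then property (ii), together with $a, b \in B_j$, forces every coface of $b$ and every coface of $a$ other than $b$ into a strictly later block; hence in $K_j$ the cell $b$ is maximal and $a$ is a face of no other cell, so $(a,b)$ is a free face pair, $K_j$ elementarily collapses onto $K_{j-1}$, and $K_{j-1} \hookrightarrow K_j$ is a deformation retract.

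It remains to assemble the pieces. Traversing $j = 1, \ldots, m$ exhibits $\Delta = K_m$ as built from $K_0$ by a sequence of elementary collapses interleaved with attachments of exactly $c_i$ cells of dimension $i$, one per critical cell. By induction on the number of critical cells --- peeling them off from the outside in and invoking the standard gluing lemma, namely that attaching a cell along $\varphi \colon S^{p-1} \to X$ can be transported along any homotopy equivalence $X \to Y$ to an attachment along the composite, with homotopy equivalent results --- one contracts each intervening run of collapses and identifies $\Delta$ with a CW complex $\Delta_c$ obtained by attaching those $c_i$ cells in the inherited order, so $\Delta_c$ has $c_i$ cells of dimension $i$ as claimed. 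The only loose end is the empty face, which cannot be removed by a collapse: if $\emptyset = d(v)$ is matched, the pair $\{\emptyset, v\}$ fails to cancel and instead leaves behind a single $0$-cell, which is precisely the stated exception, whereas an unmatched $\emptyset$ simply serves as the basepoint and is disregarded dimensionally.
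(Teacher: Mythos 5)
The paper does not prove Theorem~\ref{mainmorse} itself: it is quoted as a known result, with the reader referred to the textbooks of Jonsson and Kozlov for the definitions and proofs. Your argument is a correct sketch of the standard proof found there --- linearize the acyclic matching into a block order refining the face order, realize $\Delta$ as the top of a filtration in which each matched pair is an elementary collapse and each critical cell is a single cell attachment, then contract the collapses via the gluing lemma to obtain $\Delta_c$ --- and you rightly single out the acyclicity of the auxiliary block digraph as the crux, with the right strategy (absorb singleton blocks by transitivity, then extract a V-path). The one step you leave implicit is the dimension bookkeeping around a hypothetical cycle of pair-blocks: since each inter-block arc raises dimension by at least one while each intra-block step changes it by at most one, and the total change must be zero, every block must be entered at its top cell $b_i$ and exited at $a_i=d(b_i)$, and every inter-block relation must be a cover relation; this is precisely what turns the cycle into the configuration forbidden by Definition~\ref{Morsedef}. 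Your treatment of the empty face matches the theorem's stated exception.
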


\begin{remark}\label{sphereremark}In particular, if an acyclic partial matching $M$ has critical cells only in a fixed dimension $i$, then $\Delta$ is homotopy equivalent to a wedge of $i$-dimensional spheres.
\end{remark}

It is often useful to be able to make acyclic partial matchings on different sections of a poset and combine them to form a larger acyclic partial matching.  This process is formalized via the following theorem, referred to as the \emph{Cluster Lemma} in \cite[Lemma~4.2]{JonssonBook} and the \emph{Patchwork Theorem} in \cite[Theorem~11.10]{KozlovBook}.

\begin{theorem}\label{patchwork}
If $\phi:P\rightarrow Q$ is an order-preserving map and, for each $q \in Q$, each subposet $\phi^{-1}(q)$ carries an acyclic partial matching $M_q$, then the union of the $M_q$ is an acyclic partial matching on $P$.
\end{theorem}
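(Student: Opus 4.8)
The plan is to verify the two defining conditions of an acyclic partial matching for $M := \bigcup_{q \in Q} M_q$ directly, invoking order-preservation of $\phi$ at each turn. First I would check that $M$ is a partial matching on $P$. The fibers $\phi^{-1}(q)$ partition $P$, so an element of $P$ cannot lie in matched pairs coming from two different $M_q$, and within a single $M_q$ it lies in at most one pair by hypothesis; hence each element of $P$ belongs to at most one pair of $M$. It remains to see that a pair $(a,b) \in M_q$ is a covering pair in $P$ and not merely in the subposet $\phi^{-1}(q)$: if $a < c < b$ held in $P$, then order-preservation gives $q = \phi(a) \le \phi(c) \le \phi(b) = q$, so $\phi(c) = q$ and $c \in \phi^{-1}(q)$, contradicting that $b$ covers $a$ there. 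Thus $M$ is a partial matching, with unmatched (critical) elements exactly the union of the critical elements of the $M_q$.

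Next I would establish acyclicity. Suppose toward a contradiction that $M$ has an alternating cycle
\[ b_1 > d(b_1) < b_2 > d(b_2) < \cdots < b_n > d(b_n) < b_1 \]
with $n \ge 2$ and all $b_i$ distinct. Each pair $(d(b_i), b_i)$ belongs to a single $M_{q_i}$, so $\phi(d(b_i)) = \phi(b_i) = q_i$. Applying $\phi$ to the relations $d(b_i) < b_{i+1}$ (indices read modulo $n$) and using order-preservation yields $q_i = \phi(d(b_i)) \le \phi(b_{i+1}) = q_{i+1}$ for every $i$, and chaining these around the cycle forces $q_1 \le q_2 \le \cdots \le q_n \le q_1$. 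Hence all the $q_i$ coincide with a common value $q$, so every element of the cycle lies in $\phi^{-1}(q)$ and every consecutive matched pair lies in $M_q$. But then the displayed cycle is an alternating cycle for $M_q$, contradicting the acyclicity of $M_q$.

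Together these two steps show that $M$ is an acyclic partial matching on $P$. The only point needing any care is the double use of the order-preservation hypothesis: once to upgrade covering relations from each fiber to $P$, and once to argue that an alternating path cannot leave a fiber and come back, so a cycle must be trapped inside a single $\phi^{-1}(q)$. There is no computation involved; the whole content of the theorem is this observation.
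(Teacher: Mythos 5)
Your proof is correct and follows the standard argument; the paper does not prove this statement itself but cites Jonsson and Kozlov, where the same two-step reasoning appears: use order-preservation once to upgrade covering relations in each fiber to covering relations in $P$, and once more to show that any alternating cycle must stay inside a single fiber. One small point worth being explicit about: the downward edges $d(b_i) < b_{i+1}$ in an alternating cycle are covering relations in $P$, and since a covering relation between two elements of $\phi^{-1}(q)$ remains a covering relation in the subposet $\phi^{-1}(q)$, the cycle you exhibit really is a legitimate alternating cycle for $M_q$, completing the contradiction.
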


To facilitate the study of $\Ind(G)$ for a graph $G=(V,E)$, \emph{matching trees} were introduced by Bousquet-M\'{e}lou, et al. in \cite[Section~2]{LinussonGridGraphs}.  Let \[\Sigma(A,B):=\left\{ I\in \Ind(G) : A\subseteq I \phantom{.} \mathrm{and} \phantom{.} B\cap I = \emptyset \right\},\] where $A,B\subseteq V$ satisfy $A\cap B = \emptyset$ and $N(A):= \cup_{a\in A}N(a) \subseteq B$, where $N(a)$ denotes the neighbors of $a$ in $G$.

\begin{definition}
Let $G$ be a connected graph.  A \emph{matching tree}, $M(G)$, for $G$ is a directed tree constructed according to the following algorithm.  Begin by letting $M(G)$ be a single node labeled $\Sigma(\emptyset,\emptyset)$, and consider this node a sink until after the first iteration of the following loop:

\phantom{.}

\noindent \textbf{WHILE} $M(G)$ has a leaf node $\Sigma(A,B)$ that is a sink with $|\Sigma(A,B)|\geq 2$

\noindent \textbf{DO ONE OF THE FOLLOWING}
\begin{enumerate}
\item If there exists a vertex $p\in V\setminus (A\cup B)$ such that \[|N(p)\setminus (A\cup B)|=0,\] create a directed edge from $\Sigma(A,B)$ to a new node labeled $\emptyset$.  Refer to $p$ as a \emph{free vertex} of $M(G)$.
\item If there exist vertices $p\in V\setminus (A\cup B)$ and $v\in N(p)$ such that \[N(p)\setminus (A\cup B)=\{v\},\] create a directed edge from $\Sigma(A,B)$ to a new node labeled \[\Sigma(A\cup \{v\},B\cup N(v)).\]  Refer to $v$ as a \emph{matching vertex} of $M(G)$ with respect to $p$.
\item Choose a vertex $v\in V\setminus (A\cup B)$ and created two directed edges from $\Sigma(A,B)$ to new nodes labeled
\[\Sigma(A,B\cup \{v\})\]
and
\[\Sigma(A\cup \{v\},B\cup N(v)).\]  Refer to $v$ as a \emph{splitting vertex} of $M(G)$.
\end{enumerate}

\phantom{.}

The node $\Sigma(\emptyset,\emptyset)$ is called the \emph{root} of the matching tree, while any non-root node of degree $2$ in $M(G)$ is called a \emph{matching site} of $M(G)$ and any non-root node of degree $3$ is called a \emph{splitting site} of $M(G)$.
\end{definition}

The key observation in \cite{LinussonGridGraphs} is that a matching tree on $G$ yields an acyclic partial matching on the face poset of $\Ind(G)$, as the following theorem indicates.

\begin{theorem}{\rm \cite[Section~2]{LinussonGridGraphs} } A matching tree $M(G)$ for $G$ yields an acyclic partial matching on the face poset of $\Ind(G)$ whose critical cells are given by the non-empty sets $\Sigma(A,B)$ labeling non-root leaves of $M(G)$.  In particular, for such a set $\Sigma(A,B)$, the set $A$ yields a critical cell in $\Ind(G)$.
\end{theorem}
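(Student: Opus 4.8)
The plan is to prove the statement by induction, establishing the following slightly more general claim: for every node label $\Sigma(A,B)$ that arises during the construction, the subtree of $M(G)$ rooted at that node determines an acyclic partial matching on the subposet $\Sigma(A,B)$ of the face poset of $\Ind(G)$, whose critical cells are exactly the one-element faces $A'$ with $\Sigma(A',B')=\{A'\}$ labeling a non-root leaf of that subtree. Specializing to the root node $\Sigma(\emptyset,\emptyset)=\Ind(G)$ then gives the theorem, the final sentence being immediate once the critical cells are identified as these $A$'s.

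First I would record the structural invariants preserved by the algorithm: at every node $\Sigma(A,B)$ one has $A\cap B=\emptyset$, $N(A)\subseteq B$, and $A$ is independent, so that $A\in\Sigma(A,B)$; in particular $\Sigma(A,B)$ is never empty, and a non-root leaf must satisfy $\Sigma(A,B)=\{A\}$. Each of the three operations preserves these invariants, the key point being that if $v\in V\setminus(A\cup B)$ and $N(A)\subseteq B$ then $v\notin N(A)$, so $A\cup\{v\}$ stays independent. Next I would isolate the two identities that drive the operations: for $v\in V\setminus(A\cup B)$,
\[\Sigma(A,B)=\{I\in\Sigma(A,B):v\notin I\}\ \sqcup\ \{I\in\Sigma(A,B):v\in I\},\]
where $\{I:v\notin I\}=\Sigma(A,B\cup\{v\})$ and, using independence of $I$, $\{I:v\in I\}=\Sigma(A\cup\{v\},B\cup N(v))$. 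Finally I would verify the ``element exchange'' fact behind operations (1) and (2): if $p\in V\setminus(A\cup B)$ (and, in case (2), $v\notin I$), then the independence bookkeeping gives $N(p)\cap I=\emptyset$, so $I\mapsto I\bigtriangleup\{p\}$ is a fixed-point-free involution — adding or deleting the single vertex $p$ — on $\Sigma(A,B)$ (resp. on $\{I\in\Sigma(A,B):v\notin I\}$); this is the element matching on $p$, which is acyclic by the standard short argument that an alternating cycle would force some face to both contain and omit $p$.

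With these preliminaries the induction, on $|V\setminus(A\cup B)|$, is short. The base case is $\Sigma(A,B)=\{A\}$, with empty matching. For the inductive step I would split on the operation applied at the root. If it is (1) with free vertex $p$, the matching is the element matching $I\mapsto I\bigtriangleup\{p\}$ on all of $\Sigma(A,B)$; it pairs every face, so there are no critical cells, matching the claim for the $\emptyset$-leaf. If it is (2) with matching vertex $v$ relative to $p$, I would apply the Patchwork Theorem (Theorem~\ref{patchwork}) with $\phi:\Sigma(A,B)\to\{0<1\}$, $\phi(I)=1$ iff $v\in I$: this is order-preserving, $\phi^{-1}(0)=\{I:v\notin I\}$ carries the (perfect) element matching on $p$, and $\phi^{-1}(1)=\Sigma(A\cup\{v\},B\cup N(v))$ carries an acyclic matching with the asserted critical cells by induction. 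If it is (3), a split on $v$, I would apply Theorem~\ref{patchwork} with the same $\phi$; both fibers $\Sigma(A,B\cup\{v\})$ and $\Sigma(A\cup\{v\},B\cup N(v))$ carry acyclic matchings by induction, and the critical cells are the union of the two sets of leaf-singletons. In every case Theorem~\ref{patchwork} produces an acyclic partial matching on $\Sigma(A,B)$ whose critical cells are exactly the one-element faces labeling non-root leaves of the subtree, completing the induction.

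The main obstacle is the acyclicity bookkeeping: one must choose, at each matching or splitting site, an order-preserving map whose fibers coincide with the subposets on which the constituent matchings live, so that the Patchwork Theorem applies cleanly. A naive attempt to produce a single global order-preserving map onto the tree of leaves fails, because faces landing in sibling subtrees become incomparable in any natural order on that tree; organizing the argument as an induction with the two-element poset $\{0<1\}$ at each step is what circumvents this. By comparison, the invariants and the ``add/delete $p$'' exchanges are routine once the right independence observations (above all, $v\notin A\cup B$ and $N(A)\subseteq B$ imply $v\notin N(A)$, and $q\in N(p)\cap A$ would force $p\in B$) are written down.
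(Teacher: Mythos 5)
The paper does not prove this statement; it is quoted from Bousquet-M\'{e}lou, Linusson, and Nevo \cite{LinussonGridGraphs}, so there is no in-paper argument to compare against. Your proposal is a correct, self-contained proof, and it is essentially the argument one reconstructs from the setup the paper has already put in place: the structural invariants ($A\cap B=\emptyset$, $N(A)\subseteq B$, $A$ independent, hence $A\in\Sigma(A,B)$), the disjoint decomposition $\Sigma(A,B)=\Sigma(A,B\cup\{v\})\sqcup\Sigma(A\cup\{v\},B\cup N(v))$, the fixed-point-free element matching on a free or matching vertex $p$, and a recursive application of Theorem~\ref{patchwork} with the order-preserving indicator map $\phi(I)=[v\in I]$ onto $\{0<1\}$ at each matching or splitting site. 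Two small checks you rightly rely on and could make fully explicit: $v\neq p$ in operation~(2) because $v\in N(p)$, so the element matching on $p$ does stay inside $\phi^{-1}(0)$; and $N(p)\cap A=\emptyset$ (since $N(A)\subseteq B$ and $p\notin B$), which is what makes the involution $I\mapsto I\bigtriangleup\{p\}$ land back in $\Sigma(A,B)$. Your observation that $\Sigma(A,B)=\{A\}$ happens precisely when $V\setminus(A\cup B)=\emptyset$ is also what makes the induction on $|V\setminus(A\cup B)|$ terminate correctly at the singleton leaves, and your diagnosis of why a single global order-preserving map onto the leaf set fails (incomparability across siblings) is exactly the reason the two-element chain is the right target at each site. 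I see no gaps.
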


%%%%%%%%%%%%%%%%%%%%%%%%%%%%%%%%%%%%%%%%%%%

\section{Proof of Theorem~\ref{n=2}}\label{proofn=2}

The case $k=2$ is a simple exercise that can be completed by drawing $\Ind(SG_{2,2})$ on a sheet of paper (the complex is pure two-dimensional and has only eight maximal faces).  The case $k=3$ is handled at the end of this section.  Thus, we need to first consider the situation where $k \geq 4$.  Let $Q_k$ denote the face poset of $\Ind(SG_{2,k})$ and let $I_{k+2}$ be a $(k+2)$-element chain, with elements labeled $3<4<5<6<\cdots <k+4$.  Our goal is to create an acyclic partial matching on $Q_k$ by using Theorem~\ref{patchwork} to break $Q_k$ into preimages and produce acyclic partial matchings on these.  Note that we consider below only sets $\{i,j\}$ that are stable, regardless of the parameters provided.  In this section, addition and subtraction are modulo $k+4$.

Observe that the maximal elements of $Q_k$ are of two types:
\begin{itemize}
\item[\textbf{W:}] A \emph{wheel through $i$} is an independent set of the form \[W_i:=\{\{i,j\}:j\in [k+4]\}.\]
\item[\textbf{T:}] A \emph{triangle} is an independent set of the form \[T_{i,j,h}:=\{\{\{i,j\},\{j,h\},\{i,h\}\}:i,j,h\in [k+4]\}.\]
\end{itemize}
Thus, $\Ind(SG_{2,k})$ is built from $k+4$ simplices of dimension $k$ corresponding to wheels, with additional $2$-cells corresponding to triangles.  

Let $\phi:Q_k\rightarrow I_{k+2}$ be defined as follows.  Note that all variables referenced (e.g. $j,r,i_1,i_2,\ldots$) are integers in $[k+4]$.
\[
\phi^{-1}(3)  := \left\{ \begin{array}{ll}
\emptyset & \\
\sigma & \sigma \subseteq W_1 \\ 
\sigma & \sigma \subseteq W_3 \\
\{\{1,j\},\{3,j\}\} & 3<j<k+4 \\
T_{1,3,j} & 3<j<k+4 \\
\end{array}\right.
\]
For $3<l<k+4$,
\[
\phi^{-1}(l)  := \left\{ \begin{array}{ll}
\{2,l\} & \\
\{\{1,l\},\{2,l\}\} & \\
\{l,j\} & l<j\leq k+4 \\
\{\{l,i\},\{l,j\}\} & l<i<j\leq k+4 \\
\{\{i,l\},\{l,j\}\} & 1\leq i<l<j \leq k+4 \\
\{\{i,l\},\{j,l\}\} & 2\leq i<j<l \\
\{\{l,i_1\},\{l,i_2\},\ldots,\{l,i_r\}\} & 3\leq r\leq k+1 \\
\{\{1,j\},\{l,j\}\} & l<j\leq k+3 \\
T_{1,l,j} & l<j\leq k+3 \\
\end{array}\right.
\]
\[\phi^{-1}(k+4)  := \left\{ \begin{array}{ll}
\{2,k+4\} & \\
\{\{2,i_1\},\{2,i_2\},\ldots,\{2,i_r\}\} & 2\leq r\leq k+1 \\
\{\{k+4,i_1\},\{k+4,i_2\},\ldots,\{k+4,i_r\}\} & 2\leq r\leq k+1 \\
T_{i,j,h} & i,j,h\neq 1 \\
\end{array}\right.
\]

It is straightforward to check that this is an order-preserving map.  For $3\leq l < k+4$, we produce an acyclic matching $M_l$ on $\phi^{-1}(l)$ via the matching $(\sigma, \sigma \cup \{1,l\})$ for each $\sigma$ not containing $\{1,l\}$.  It is straightforward to check that for each $l$, every element of $\phi^{-1}(l)$ is an element of some matched pair in $M_l$.  $M_l$ is acyclic because if one attempts to construct a directed cycle in $\phi^{-1}(l)$ as in Definition~\ref{Morsedef}, starting from an element $\sigma$ not containing $\{1,l\}$, we must begin our cycle with \[\sigma < \sigma \cup \{1,l\} < (\sigma \cup \{1,l\}) \setminus \{i,j\} \] for some $\{i,j\}\neq \{1,l\}$.  However, there is no $\tau \in Q_k$ such that $\sigma':=(\sigma \cup \{1,l\}) \setminus \{i,j\}$ satisfies $(\sigma',\tau)\in M_l$, hence we cannot complete our desired cycle.

Remaining is only the poset $\phi^{-1}(k+4)$.  To establish an acyclic partial matching here, we apply Theorem~\ref{patchwork} a second time.  Let $C:=\{b<r_6<r_7<\cdots<r_{k+3}<m_1<m_2<t_2<s_4<s_5<\cdots<s_{k+2}<m_3<m_4<t_{k+4}\}$ be a chain, and define $\psi:\phi^{-1}(k+4)\rightarrow C$ as follows.

\[
\psi^{-1}(b) := \left\{  \begin{array}{l}
\{2,k+4\} \\
\{\{2,4\},\{2,k+4\}\} \\
\end{array}\right.
\]

For $6\leq i \leq k+3$,
\[
\psi^{-1}(r_i)  := \left\{  \begin{array}{l}
\{\{2,4\},\{2,i\}\} \\
\{\{2,4\},\{2,i\},\{4,i\}\} \\
\end{array}\right.
\]

\[\psi^{-1}(m_1):= \left\{ \begin{array}{l}
\{\{2,5\},\{2,7\}\} \\
\{\{2,5\},\{2,7\},\{5,7\}\} \\
\end{array}\right.
\]

\[\psi^{-1}(m_2):= \left\{ \begin{array}{l}
\{\{2,4\},\{2,5\}\} \\
\{\{2,4\},\{2,5\},\{2,7\}\} \\
\end{array}\right.
\]

\[
\psi^{-1}(t_2)  := \left\{ \begin{array}{ll}
\{\{2,i\},\{2,j\}\} & 5\leq i<j\leq k+4 \\
\text{ except } \{\{2,5\},\{2,7\}\} &  \\
\{\{2,i_1\},\{2,i_2\},\ldots,\{2,i_r\}\} & 3\leq r\leq k+1 \\
\text{ except } \{\{2,4\},\{2,5\},\{2,7\}\} & \\
\end{array}\right.
\]

For $4\leq j\leq k+2$,
\[
\psi^{-1}(s_j)  := \left\{ \begin{array}{l}
\{\{2,k+4\},\{j,k+4\}\} \\
\{\{2,k+4\},\{j,k+4\},\{2,j\}\} \\
\end{array}\right.
\]

\[\psi^{-1}(m_3):= \left\{ \begin{array}{l}
\{\{3,k+4\},\{5,k+4\}\} \\
\{\{3,k+4\},\{5,k+4\},\{3,5\}\} \\
\end{array}\right.
\]

\[\psi^{-1}(m_4):= \left\{ \begin{array}{l}
\{\{2,k+4\},\{3,k+4\}\} \\
\{\{2,k+4\},\{3,k+4\},\{5,k+4\}\} \\
\end{array}\right.
\]

\[
\psi^{-1}(t_{k+4})  := \left\{ \begin{array}{ll}
\{\{i,k+4\},\{j,k+4\}\} & 3\leq i < j < k+4 \\
\text{ except } \{\{3,k+4\},\{5,k+4\}\} &  \\
\{\{k+4,i_1\},\{k+4,i_2\},\ldots,\{k+4,i_r\}\} & 3\leq r\leq k+1 \\
\text{ except } \{\{2,k+4\},\{3,k+4\},\{5,k+4\}\} & \\
T_{i,j,k} & T_{i,j,k} \text{ not yet listed} \\
\end{array}\right.
\]

It is straightforward to check that this is an order preserving map.  To form acyclic partial matchings on these preimages, for all elements in the chain except $t_2$ and $t_{k+4}$, match the pair of elements in the preimage.  Match the pair $(\sigma,\sigma \cup \{2,4\})$ on $\psi^{-1}(t_2)$; this matching is acyclic and matches every element.  Match the pair $(\sigma,\sigma \cup \{2,k+4\})$ on $\psi^{-1}(t_{k+4})$; this matching is again acyclic, but does not match every element.  One can check that the critical cells on $\psi^{-1}(t_{k+4})$ are given by the set 
\[M_{\mathrm{crit}}:=\{T_{i,j,h}:i<j<h, \phantom{.} i,j,h\neq 1\}\setminus S,\]
where 
\[S:= \{T_{2,4,j}:6\leq j \leq k+3\} \cup \{T_{2,j,k+4}:4\leq j \leq k+2\} \cup \{T_{3,5,k+4},T_{2,5,7}\}.\]  
It is straightforward to calculate that the size of $\{T_{i,j,h}:i<j<h, \phantom{.} i,j,h\neq 1\}$ is $\binom{k+1}{3}$ and the size of $S$ is $2k-1$, hence the size of $M_{\mathrm{crit}}$ is \[\binom{k+1}{3}-(2k-1) = \frac{(k-3)(k-1)(k+4)}{6}-1,\] as desired.  We are now in a position to invoke Theorems~\ref{mainmorse}~and~\ref{patchwork}, completing our proof for the case $k\geq 4$.

For the case $k=3$, we define $\phi$ as above, but we must modify the definition of $\psi$.  In particular, for this case we eliminate $\psi^{-1}(m_1)$ and $\psi^{-1}(m_2)$ and include $\{\{2,5\},\{2,7\}\},\{\{2,4\},\{2,5\}\},\text{ and }\{\{2,4\},\{2,5\},\{2,7\}\}$ in $\psi^{-1}(t_{2})$.  Note that the triangle $T_{2,5,7}$ is already present in $\psi^{-1}(s_5)$.  There are only four stable triangles avoiding $1$, namely $T_{2,4,6},T_{2,4,7},T_{2,5,7},\text{ and }T_{3,5,7}$.  These four cells are paired individually in the preimages of $r_6,s_4,s_5,\text{ and }m_3$, respectively, implying that there are no unlisted triangles contained in $\psi^{-1}(t_7)$.  Thus, for the case $k=3$, the only critical cell is $\{\{2,4\},\{2,5\}\}$, and our proof is complete.

%%%%%%%%%%%%%%%%%%%%%%%%%%%%%%%%%%%%%%%%%%%%%%%%%%%%%%%%%%%%%%%%

\section{The graphs $SG_{n,2}$ and $E_{2n+2}$}\label{SGn2}

The graphs $SG_{n,2}$ admit an alternate description which we will discuss here.  This description is given in detail in \cite{BraunSymmetry}, but the details are easy to fill in from the following.  For an integer $n\geq 2$, we define:
\begin{itemize}
\item[] $p(n):= \left\{ \begin{array}{ll}
n & \mathrm{if}\phantom{.} 2\nmid n \\
n-1 & \mathrm{otherwise}
\end{array}\right.$
\item[] $o(n):= \left\{ \begin{array}{ll}
\frac{n+1}{2} & \mathrm{if} \phantom{.} 2\mid n+1 \\
\frac{n+2}{2} & \mathrm{otherwise}
\end{array}\right.$
\end{itemize}
Observe that a vertex of $SG_{n,2}$ is given by a stable $n$-subset of $[2n+2]$ and that these subsets may be partitioned into three classes.  

\begin{definition} Let $X:=\{i_1,i_2,\ldots, i_n\}$ be a stable $n$-subset of $[2n+2]$, where $[2n+2]$ is ordered cyclically.
\begin{itemize}
\item[A:] We say $X$ is an \emph{alternating end vertex} if $X$ is an image \[\alpha(\{1,3,\ldots,p(n),p(n)+3,p(n)+5,\ldots,2n\})\] for some permutation $\alpha$ of the stable $n$-subsets of $[2n+2]$ induced by a cyclic permutation of $[2n+2]$;
\item[B:] We say $X$ is a \emph{bipartite end vertex} if $X$ is a proper subset of either the even numbers or the odd numbers; and
\item[M:] We say $X$ is a \emph{middle vertex} for all other cases. 
\end{itemize}
\end{definition}

\begin{figure}[ht]
\begin{center}
\includegraphics{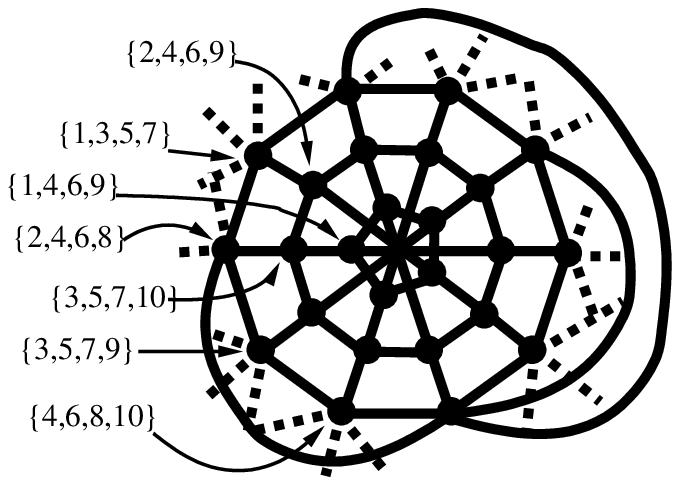}
\end{center}
\caption{$SG_{4,2}$}
\label{SGFig1}
\end{figure}

\begin{figure}[ht]
\begin{center}
\includegraphics{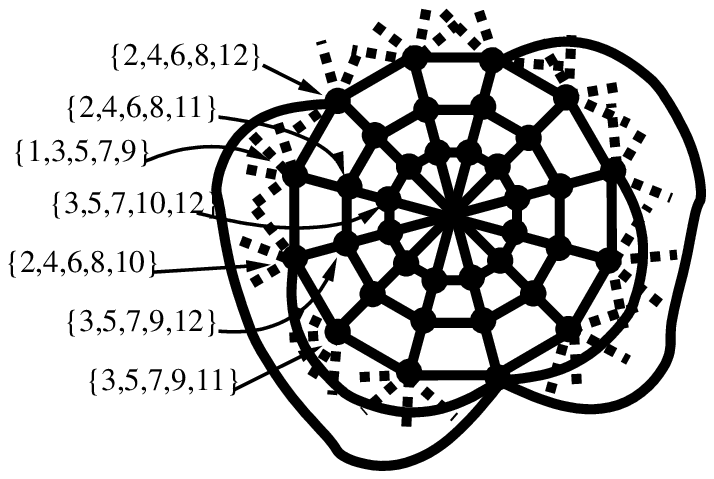}
\end{center}
\caption{$SG_{5,2}$}
\label{SGFig2}
\end{figure}

Figures~\ref{SGFig1} and~\ref{SGFig2} contain representations of $SG_{4,2}$ and $SG_{5,2}$ which we will use as references throughout this discussion. Recall that given two graphs $G$ and $H$, the \emph{cartesian product} $G\Box H$ has vertex set $V(G)\times V(H)$ with $(u,v)$ adjacent to $(u',v')$ if and only if $u=u'$ and $\{v,v'\}\in E(H)$, or $v=v'$ and $\{u, u'\}\in E(G)$. Note that each copy of $SG_{n,2}$ contains a complete bipartite graph $K_{n+1,n+1}$ induced by the bipartite end vertices of $SG_{n,2}$, hence the nomenclature.  In Figures~\ref{SGFig1} and~\ref{SGFig2}, these vertices are the outer ring of the graph, where for visual clarity we have not displayed all the edges, replacing them instead with dashed lines at each vertex indicating the presence of an edge.  Let $C_{j}$ denote the cycle of length $j$ and let $P_{j}$ denote the path with $j$ vertices.  For every $n$, the middle vertices of $SG_{n,2}$ induce as a subgraph a copy of $C_{2n+2}\Box P_{o(n)-2}$ which we call the middle cylinder.  In our examples above, $o(4)=o(5)=3$, and hence the middle rings of vertices in Figures~\ref{SGFig1} and~\ref{SGFig2} are $C_{10}\Box P_{1}$ and $C_{12}\Box P_1$, respectively.  It is easy to see that each bipartite end vertex of $SG_{n,2}$ is connected to a unique vertex on an end cycle of the middle cylinder.  Finally, the alternating end vertices of $SG_{n,2}$ induce a copy of $C_{n+1}$ in the case when $2\mid n$ and a copy of $DC_{2n+2}$ in the case when $2\nmid n$, where $DC_{2n+2}$ is defined to be a $(2n+2)$-cycle augmented by edges connecting antipodal vertices.  In the first case, each alternating end vertex of $SG_{n,2}$ is connected to a pair of middle vertices, while in the latter case each alternating end vertex of $SG_{n,2}$ is connected to a unique middle vertex.  A moment of thought reveals that when $n$ is even, there are $n+1$ alternating end vertices, while for $n$ odd there are $2n+2$ alternating end vertices, and there are $2n+2$ bipartite end vertices and $(2n+2)\cdot (o(n)-2)$ middle vertices.

\begin{remark} In the case where $n$ is odd, the graph $SG_{n,2}$ can be described as an even cylinder $C_{2n+2}\Box P_{o(n)}$ with additional edges forming a complete bipartite graph on one end and a cycle with diagonals on the other end, with a similar description for $n$ even.  Though the graphs $SG_{n,2}$ admit this nice description, the complexes $\Ind(SG_{n,2})$ have been resistant to investigation.  The difficult nature of this problem is similar to the difficult nature of the study of $\Ind(C_{2n+2}\Box P_l)$ for arbitrary $l\geq 6$, $n\geq 3$.  As discussed in Section~6 of \cite{JonssonHardSquares}, even a determination of the Euler characteristic of $\Ind(C_{2n+2}\Box P_l)$ is unknown in general.  For small values of $n$ and $l$, there are some results regarding the homotopy type and Euler characteristic of these complexes, e.g. \cite{Thapper}, but in general this is an interesting open problem.
\end{remark}

One approach for studying $\Ind(SG_{n,2})$ would be to consider arbitrary even circumference cylinders $C_{2n+2}\Box P_l$ and augment their end cycles in a manner consistent with that described above.  One might hope to then induct on $l$ in some fashion, though our attempts following this approach have been unsuccessful.  However, were one to pursue this strategy, the base case would be a cylinder of length two, i.e. a cylinder with no middle vertices.  We therefore introduce the following class of graphs, which are formed by considering only the end vertices of $SG_{n,2}$ and connecting them directly via edges.  Let $K_{n+1,n+1}$ denote the complete bipartite graph on vertex set $\{1,2,\ldots,2n+2\}$ with bipartition into the set of odds and evens.  Let $DC_{2n+2}$ denote the cycle on the vertex set $\{c_1,c_2,\ldots,c_{2n+2}\}$ with edges $\{\{c_i,c_{i+1}\}: i\in [2n+2]\}\cup\{\{c_i,c_{i+n+1}\}:i\in [n+1]\}$.  Let $C_{n+1}$ denote the cycle on the vertex set $\{c_1,c_3,c_5,\ldots,c_{2n+1}\}$ with edges $\{\{c_i,c_{i+2}\}:i\in \{1,3,5,\ldots,2n+1\}\}$.  For all of these graphs, addition of indices for vertices is modulo $2n+2$.  

\begin{definition}
For $n\geq 2$, let $E_{2n+2}$ denote the following graph: if $2\nmid n$, take a copy of $K_{n+1,n+1}$ and a copy of $DC_{2n+2}$ and add an edge connecting each vertex $i$ of $K_{n+1,n+1}$ to the vertex $c_i$ of $DC_{2n+2}$.  If $2\mid n$, take a copy of $K_{n+1,n+1}$ and a copy of $C_{n+1}$ and add edges connecting each vertex $c_i$ of $C_{n+1}$ to the vertices $i$ and $i+n+1$ of $K_{n+1,n+1}$.
\end{definition}

As Theorem~\ref{E2n} indicates, the topology of $\Ind(E_{2n+2})$ is reasonably well-behaved.  It would be of interest to understand more about the topology of the independence complexes of the graphs obtained by adding middle cycles back into $E_{2n+2}$; in particular, it would be interesting to know if there is any relation between the independence complexes of the graphs obtained by adding $j$ middle cycles and $j+k$ middle cycles to $E_{2n+2}$ for small values of $k$.

%%%%%%%%%%%%%%%%%%%%%%%%%%%%%%%%%%%%%%%%%%%%%%%%%%%%%%%%%%%%%%%%%

\section{Proof of Theorem~\ref{E2n}}\label{proofE2n}

To prove Theorem~\ref{E2n}, we will construct matching trees for the graphs $E_{2n+2}$, dividing our proof into the cases where $4\mid 2n+2$ and $4\nmid 2n+2$.  We will refer to vertices of the matching tree as \emph{nodes}, reserving the word \emph{vertices} for the vertices of $E_{2n+2}$.  During the construction of our trees, references to matching, splitting, and free vertices of $E_{2n+2}$ are references to the types of vertices possible in the matching tree construction algorithm given in Section~\ref{DMT}.  Throughout, it might be helpful for the reader to use diagrams like Figures~\ref{ELPicProof},~\ref{nOddPic}, and \ref{nEvenPic} to illustrate our matching trees, which we typically describe with $\Sigma(A,B)$ notation; in these diagrams, each node of the matching tree is shown as a graph representing $\Sigma(A,B)$, where the black dots are elements of $A$, the white dots are elements of $B$, and the gray dots are in neither $A$ nor $B$.  Before we begin, we provide a definition and three lemmas.

\begin{definition} Let $EL_0:=K_2$ and let $EL_1:=K_{1,3}$, where $K_{1,3}$ is a complete bipartite graph with bipartition sets of size $1$ and $3$.  For $r\geq 2$, let $EL_r$ be the graph with $2r+2$ vertices depicted in Figure~\ref{ELPic}.
\begin{figure}[ht]
\begin{center}
\input{ELPic.pstex_t}
\end{center}
\caption{}
\label{ELPic}
\end{figure}
\end{definition}

\begin{lemma}\label{ELlemma} There is a matching tree on $EL_r$ with a single non-empty leaf corresponding to a critical cell of size
\[
\left\{
\begin{array}{ll}
2k+1 & \text{ if  } r=4k \\
2k+1 & \text{ if  } r=4k+1 \\
\end{array}
\right. ,
\]
and no critical cell if $r=4k+2$ or $r=4k+3$.
\end{lemma}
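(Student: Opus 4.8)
The plan is to process the graph $EL_r$ from one end to the other using the matching-tree algorithm, always feeding off the degree-constraints at the ``tip'' so that we have no choice: at each stage we locate a vertex $p$ whose only non-decided neighbor is some $v$, perform the matching step of the algorithm with $v$ as a matching vertex with respect to $p$, and thereby push $A$ and $B$ further along $EL_r$, leaving a smaller graph of the same family. Concretely, I would first treat the two base cases $EL_0 = K_2$ and $EL_1 = K_{1,3}$ by hand: for $K_2$ a single splitting step on one vertex $v$ yields leaves $\Sigma(\emptyset,\{v\})$ (which collapses further since the other vertex becomes free) and $\Sigma(\{v\},\{v'\})$, leaving exactly one critical cell $\{v\}$, of size $1 = 2\cdot 0 + 1$, matching the $r = 4k$ formula with $k = 0$; for $K_{1,3}$ one checks similarly that everything matches away, giving no critical cell, which matches the $r = 4k+1$ statement with... wait, $EL_1$ has $r=1=4\cdot 0+1$, so actually I want a critical cell of size $1$ there too, and indeed taking the center as a splitting vertex and then collapsing leaves one critical cell. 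I would write these two cases out explicitly since the whole induction rests on them.

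The heart of the argument is an inductive step that relates the matching tree on $EL_r$ to that on $EL_{r-4}$ (or more economically on $EL_{r-2}$, tracking only the parity class). Starting from the root $\Sigma(\emptyset,\emptyset)$, I would identify a sequence of forced moves near one end of the ladder-like graph in Figure~\ref{ELPic}: a leaf vertex has a unique undecided neighbor, so a matching step applies, which places one vertex in $A$, its neighbors in $B$, and then creates new degree-one situations allowing two or three further forced matching/free-vertex steps, after which the remaining undecided vertices induce a copy of $EL_{r-4}$ with all the ``decided'' vertices contributing a fixed block of size $2$ to every surviving independent set. By Theorem~\ref{patchwork} applied to the tautological projection, or simply by iterating the matching-tree construction, the critical cells of the resulting tree are in bijection with those of $EL_{r-4}$, each enlarged by a fixed $2$-element set. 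Hence if $EL_{r-4}$ has a single critical cell of size $s$, then $EL_r$ has a single critical cell of size $s+2$; and if $EL_{r-4}$ has none, neither does $EL_r$. Combined with the base cases this gives the four-periodic behavior: $r = 4k$ and $r = 4k+1$ produce one critical cell of size $2k+1$, while $r = 4k+2$ and $r = 4k+3$ produce none, since the induction is seeded from $r = 2$ and $r = 3$ which I would also verify directly by a short forced sequence of moves reducing them to $EL_0$-like and $EL_1$-like leftovers with the parity twisted so that no critical cell survives.

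The main obstacle, and the step I would spend the most care on, is getting the bookkeeping of the forced moves at the end of $EL_r$ exactly right: I need to check that after the initial matching step the newly created degree-one vertices really do allow the algorithm to proceed deterministically (cases (1) and (2) of the construction, never case (3)), that the residual undecided subgraph is genuinely isomorphic to $EL_{r-4}$ rather than to some boundary-modified variant, and that the boundary of $EL_{r-4}$ inside $EL_r$ has not acquired extra edges to the decided set that would change which vertices are ``free.'' This is precisely where the picture in Figure~\ref{ELPic} matters, and where a careless induction would break — the shift by $4$ (rather than $2$) and the need to track the parity class through $EL_2, EL_3$ as well as $EL_0, EL_1$ is exactly what produces the asymmetry in the statement. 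Once that combinatorial reduction is nailed down, the conclusion follows from Remark~\ref{sphereremark} and Theorem~\ref{mainmorse} with no further work.
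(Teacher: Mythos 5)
Your proposal is correct and follows essentially the same route as the paper: reduce $EL_r$ to $EL_{r-4}$ by a short sequence of forced matching-vertex moves that add exactly two vertices to $A$, then iterate down to and verify the base cases $EL_0, EL_1, EL_2, EL_3$ directly. The paper's proof is just this (with Figure~\ref{ELPicProof} supplying the explicit four-step reduction you describe informally), so the only thing missing from your write-up is the careful check you flag at the end — that the residual undecided subgraph really is an unmodified $EL_{r-4}$ — which the figure makes evident.
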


\begin{proof}
Figure~\ref{ELPicProof} demonstrates a matching tree taking $EL_r$ to $EL_{r-4}$ via matching vertices.  Iterate this process until the only gray vertices remaining form a copy of $EL_r$ for $r=0,1,2,\text{ or }3$.  At each iteration, exactly two black vertices are added.  It is now an easy exercise to check that there are matching trees on $EL_0$ and $EL_1$ yielding a single non-empty leaf with a single black vertex, while there are matching trees on $EL_2$ and $EL_3$ with no critical cells.
\begin{figure}[ht]
\begin{center}
\input{ELPicProof.pstex_t}
\end{center}
\caption{}
\label{ELPicProof}
\end{figure}
\end{proof}

\begin{lemma}\label{pathlemma}{\rm(\cite[Prop 11.16]{KozlovBook})} For a path $P_n$ with $n$ vertices and $n-1$ edges, there is a matching tree $M(P_n)$ with a single non-empty leaf corresponding to a critical cell of size
\[
\left\{
\begin{array}{ll}
k & \text{ if  } n=3k \\
k+1 & \text{ if  } n=3k+2 \\
\end{array}
\right. ,
\]
and no critical cells if $n=3k+1$.
\end{lemma}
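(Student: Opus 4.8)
The plan is to construct the matching tree $M(P_n)$ explicitly, using only the free-vertex rule~(1) and the matching-vertex rule~(2) of the algorithm in Section~\ref{DMT}; in particular the tree will be a single directed path of nodes with no splitting site, hence will have exactly one leaf. Label the vertices of $P_n$ as $1, 2, \ldots, n$ with edges $\{i, i+1\}$.

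The basic move is the following. Suppose $\Sigma(A, B)$ is a node whose gray vertices $V \setminus (A \cup B)$ form the consecutive block $\{m, m+1, \ldots, n\}$ and that this block has at least two vertices. Apply rule~(2) with $p = m$ and $v = m+1$: the only gray neighbour of $m$ is $m+1$, and any other $P_n$-neighbour of $m$, namely $m-1$ when $m > 1$, already lies in $B$ (it was deleted at the previous move), so $N(m) \setminus (A \cup B) = \{m+1\}$ and the move is legal. It produces the child $\Sigma(A \cup \{m+1\},\, B \cup N(m+1))$, whose gray vertices form the consecutive block $\{m+3, \ldots, n\}$ (empty once $m \geq n-2$). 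Thus each move deletes three vertices from the left end, adds exactly one new vertex to $A$, and leaves the gray part again a consecutive block, so the move can be repeated; after $j$ moves one has $|A| = j$ and a gray block with $n - 3j$ vertices (or none).

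I would iterate this move as long as the gray block has at least two vertices. Three outcomes arise. If the gray part eventually becomes empty, the node equals $\{A\}$, a non-empty leaf, and $A$ is the unique critical cell; this happens for $n = 3k$ after $k$ moves (so $|A| = k$) and for $n = 3k+2$ after $k+1$ moves, the last applied to a gray $P_2$ (so $|A| = k+1$). If $n = 3k+1$, then after $k$ moves the gray part is a single vertex $w$ whose lone $P_n$-neighbour is in $B$; rule~(1) then applies with $w$ as a free vertex, the child is the empty leaf, and there is no critical cell. As the tree is a directed path these are the only leaves, proving the statement; acyclicity of the resulting partial matching is automatic from the matching-tree theorem in Section~\ref{DMT}.

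There is no genuine obstacle here: the proof is a short induction, and all that really needs checking is that the hypothesis of rule~(2) holds at each stage — which it does because the left endpoint of the current gray block always has exactly one gray neighbour — plus the routine bookkeeping modulo $3$ that picks out which of the three terminal cases occurs. (The lemma is \cite[Prop.~11.16]{KozlovBook}; the above merely recasts the usual argument in the matching-tree formalism of this paper.)
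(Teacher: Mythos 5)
Your proof is correct, and it does exactly what the paper's Remark~5.4 envisions: the paper cites Kozlov's Proposition~11.16 without reproving it, remarking only that the discrete-Morse argument there is ``simple to convert'' to the matching-tree formalism. You have carried out that conversion explicitly, maintaining the invariant that the gray region is the suffix $\{3j+1,\ldots,n\}$ with $|A|=j$, applying the matching-vertex rule with $p$ the left endpoint, and verifying the hypothesis $N(p)\setminus(A\cup B)=\{p+1\}$ at each step; the three terminal cases modulo $3$ come out as claimed, and since no splitting vertex is ever used the tree is a single path with one leaf.
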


\begin{lemma}\label{cyclelemma}{\rm(\cite[Prop 11.17]{KozlovBook})} For a cycle $C_n$ with $n$ vertices, there is a matching tree $M(C_n)$ with two non-empty leaves corresponding to critical cells of size $k$ if $n=3k$, and one non-empty leaf corresponding to a critical cell of size $k$ if $n=3k\pm 1$.
\end{lemma}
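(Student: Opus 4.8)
The plan is to obtain the matching tree for $C_n$ by making a single splitting move and then running the path matching tree of Lemma~\ref{pathlemma} on each of the two resulting branches. Label the vertices of $C_n$ cyclically as $1,2,\dots,n$. At the root $\Sigma(\emptyset,\emptyset)$ every vertex has both of its neighbours active, so neither a free nor a matching vertex exists and the construction is forced to begin with a split; I would take vertex $1$ as the splitting vertex. This produces the node $\Sigma(\emptyset,\{1\})$, at which the active vertices $\{2,3,\dots,n\}$ induce the path $P_{n-1}$ (there is no chord $\{2,n\}$ in $C_n$), and the node $\Sigma(\{1\},N(1))=\Sigma(\{1\},\{2,n\})$, at which the active vertices $\{3,4,\dots,n-1\}$ induce the path $P_{n-3}$ while $1$ is adjacent to none of them.

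On each branch I would continue the construction by performing, in order, exactly the matching/free/splitting moves prescribed by Lemma~\ref{pathlemma} for the active path. This is legitimate because the moves available at a node $\Sigma(A,B)$ depend only on the induced subgraph on $V\setminus(A\cup B)$ together with its adjacencies into $A\cup B$, which the split has not altered. On the first branch this reproduces the matching tree of $\Ind(P_{n-1})$ verbatim (with $1$ merely adjoined to every $B$-set), so its non-root leaves are precisely those of that tree. On the second branch it reproduces the matching tree of $\Ind(P_{n-3})$ with $1$ adjoined to every $A$-set; since $1$ has no neighbour among the active vertices, each such $A$-set remains independent, so the critical cells are exactly those of $\Ind(P_{n-3})$ enlarged by one vertex.

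It then remains to read the three residues of $n$ modulo $3$ off the case table of Lemma~\ref{pathlemma}. If $n=3k$, then $P_{n-1}=P_{3(k-1)+2}$ yields one critical cell of size $k$ and $P_{n-3}=P_{3(k-1)}$ yields one critical cell of size $k-1$, which becomes size $k$ after adjoining $1$; hence there are two critical cells of size $k$. If $n=3k+1$, then $P_{n-1}=P_{3k}$ yields one critical cell of size $k$ while $P_{n-3}=P_{3(k-1)+1}$ yields none; hence one critical cell of size $k$. If $n=3k-1$, then $P_{n-1}=P_{3(k-1)+1}$ yields none while $P_{n-3}=P_{3(k-2)+2}$ yields one critical cell of size $k-1$, which becomes size $k$; hence again one critical cell of size $k$. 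The degenerate small cases, e.g.\ $n=3$ where $P_{n-3}=P_0$ contributes the empty face as a size-$0$ critical cell, are checked directly, and the assertion matches the claim in every case; combining with Theorem~\ref{mainmorse} recovers the homotopy type in Theorem~\ref{cycle}.

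There is no serious obstacle here: once Lemma~\ref{pathlemma} is available, the whole of this lemma is one forced splitting move followed by bookkeeping modulo $3$. The one point deserving genuine care is the compositionality statement in the second paragraph — that continuing a matching tree from a node whose active part is a path reproduces, node for node, that path's matching tree, with the frozen vertex carried along in the $A$-sets on the second branch — together with the (easy) observation that the initial split is genuinely forced because $C_n$ offers no free or matching vertex at the root.
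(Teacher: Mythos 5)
Your proof is correct. Note that the paper itself does not prove this lemma: it cites Kozlov's book (Prop.\ 11.17) and merely remarks that "it is simple to convert the proofs referenced in \cite{KozlovBook} to the language of matching trees." Your construction --- split on a single vertex so that one branch carries $\Sigma(\emptyset,\{1\})$ with active part $P_{n-1}$ and the other carries $\Sigma(\{1\},\{2,n\})$ with active part $P_{n-3}$ (disjoint from the frozen vertex), then invoke Lemma~\ref{pathlemma} on each branch --- is exactly the natural such conversion, and the modular arithmetic in your case analysis is right in all three residue classes, including the boundary cases $P_0$ and $P_1$. Your justification of the compositionality step is the right one: the moves available at a node and their effect on $(A,B)$ depend only on the induced subgraph on $V\setminus(A\cup B)$ and its adjacencies into $A\cup B$, and on the second branch no active vertex is adjacent to the frozen vertex $1$, so the path tree's $A$-sets stay independent after adjoining $1$ and their sizes increase uniformly by one.
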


\begin{remark} Lemmas~\ref{pathlemma}~and~\ref{cyclelemma} were not originally proved using matching trees, but it is simple to convert the proofs referenced in \cite{KozlovBook} to the language of matching trees.
\end{remark}

\subsection{Case: $4\mid 2n+2$}

The first step of our matching tree construction proceeds as follows: from $\Sigma(\emptyset,\emptyset)$, use vertex $1$ as a splitting vertex, yielding directed edges to new nodes $\Sigma(\{1\},\{2,4,\ldots,2n+2,c_1\})$ and $\Sigma(\emptyset,\{1\})$.  For node $\Sigma(\{1\},\{2,4,\ldots,2n+2,c_1\})$, we have that vertex $c_3$ is a matching vertex with respect to vertex $3$, thus we can create a directed edge to a new node labeled $\Sigma(\{1,c_3\},\{2,3,4,6,\ldots,2n+2,c_2,c_4,c_{3+n+1}\})$.  For this node, vertex $n+4$ is a free vertex, and we can create a new directed edge to a node labeled $\emptyset$.

Our construction now proceeds by repeating this first branching step on the node $\Sigma(\emptyset,\{1,2,3,\ldots,i\})$, where $1\leq i \leq n-2$.  Given node $\Sigma(\emptyset,\{1,2,3,\ldots,i\})$, use vertex $i+1$ as a splitting vertex, yielding two new nodes 
\[\Sigma(\emptyset,\{1,2,3,\ldots,i,i+1\}) \]
and 
\[ \Sigma(\{i+1\},\{1,2,3,\ldots,i,i+2,i+4,i+6,\ldots,j,c_{i+1}\}),\] 
where $j$ is $2n+1$ or $2n+2$ depending on the parity of $i$.  For the latter node, we have that vertex $c_{i+3}$ is a matching vertex with respect to $i+3$, thus we can create a new edge to a node labeled \[\Sigma(\{i+1,c_{i+3}\},\{1,2,3,\ldots,i,i+2,i+4,i+6,\ldots,j,c_{i+1},i+3,c_{i+2},c_{i+4},c_{i+4+n}\}).\]  Finally, vertex $i+4+n$ is a free vertex for this node (because $n$ is odd), and thus we can create a new edge to a node labeled $\emptyset$.  

After repeating this for all $1\leq i \leq n-2$, our only remaining leaf node in $M(E_{2n+2})$ is $\Sigma(\emptyset,\{1,2,3,\ldots,n-1\})$.  We now proceed by repeating a different matching process where we assume that $0\leq r \leq n-2$ and that $M(E_{2n+2})$ has only one leaf node, labeled $\Sigma(\emptyset,\{1,2,3,\ldots,n-1+r\})$.  We use vertex $n+r$ as a splitting vertex, creating two new edges to nodes labeled 
\[\Sigma(\emptyset,\{1,2,3,\ldots,n+r\})\]
and
\[\Sigma\left(\{n+r\},\{1,2,3,\ldots,n-1+r,n+r+1,n+r+3,\ldots,j,c_{n+r}\}\right),\] 
where $j$ is either $2n+1$ or $2n+2$ depending on the parity of $n+r$.  The first node is handled when we consider the case $r+1$, while for the second node we use $c_{n+r+2}$ as a matching vertex with respect to vertex $n+r+2$, yielding a new edge to a node labeled  \[\Sigma\left(\left\{\begin{array}{l} n+r, \\ c_{n+r+2}\end{array}\right\},\left\{
\begin{array}{l}
1,2,3,\ldots,n-1+r, \\ 
n+r+1,n+r+3,\ldots,j,c_{n+r}, \\
n+r+2,c_{n+r+1},c_{n+r+3},c_{r+1}
\end{array}
\right\}\right),\] where $j$ is as before.  Using the matching vertex $c_{n+r+4}$ with respect to the vertex $n+r+4$, we obtain a new edge to a node labeled \[\Sigma\left(\left\{\begin{array}{l}n+r, \\ c_{n+r+2}, \\c_{n+r+4}\end{array}\right\},\left\{
\begin{array}{l}
1,2,3,\ldots,n+r, \\
n+r+1,n+r+3,\ldots,j,c_{n+r}, \\
n+r+2,c_{n+r+1},c_{n+r+3},c_{r+1}, \\
n+r+4,c_{n+r+5},c_{r+3}
\end{array}\right\}\right).\]  Finally, vertex $c_{r+2}$ is a free vertex for this node, and we obtain a new edge to a node labeled $\emptyset$.

\begin{figure}[ht]
\begin{center}
\input{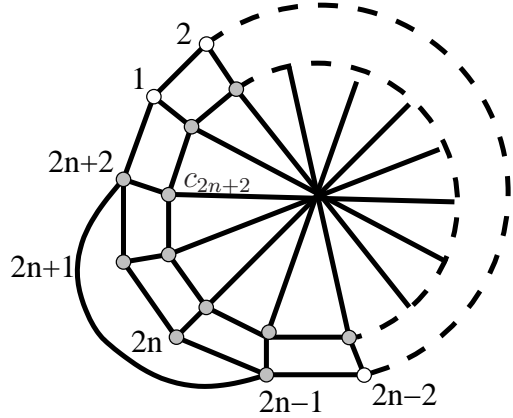}
\end{center}
\caption{We have suppressed some of the edges in the bipartite graph portion of $E_{2n+2}$ for clarity.}
\label{nOddPic}
\end{figure}
Following these iterations, our matching tree has only one non-empty leaf node, labeled $\Sigma(\emptyset,\{1,2,3,\ldots,2n-2\})$, represented as the graph in Figure~\ref{nOddPic}.  Our first step is to use vertex $2n$ as a splitting vertex, yielding two new edges to new nodes which we handle in subcases below.

\subsection{Subcase: $\Sigma(\{2n\},\{1,2,3,\ldots,2n-1,2n+1,c_{2n}\})$}\label{nOddPoint1}  Using vertex $c_{2n+2}$ as a splitting vertex makes vertex $2n+2$ a free vertex for 
\[\Sigma(\{2n\},\{1,2,3,\ldots,2n-1,2n+1,c_{2n},c_{2n+2}\}),\] 
leaving us to consider only the node 
\[\Sigma(\{2n,c_{2n+2}\},\{1,2,3,\ldots,2n-1,2n+1,2n+2,c_{2n},c_{2n+1},c_{1},c_{n+1}\}).\]  
Using vertex $c_{n-1}$ as a splitting vertex, we see that vertex $c_n$ is a free vertex for 
\[\Sigma(\{2n,c_{2n+2}\},\{1,2,3,\ldots,2n-1,2n+1,2n+2,c_{2n},c_{2n+1},c_{1},c_{n+1},c_{n-1}\}),\] 
leaving us to consider only the node 
\[\Sigma(\{2n,c_{2n+2},c_{n-1}\},\{1,2,3,\ldots,2n-1,2n+1,2n+2,c_{2n},c_{2n+1},c_{1},c_{n+1},c_n,c_{n-2}\}).\]  
The remaining vertices induce as a subgraph of $E_{2n+2}$ a copy of $EL_{n-4}$.  Thus, we are in a position to invoke Lemma~\ref{ELlemma}, and we see that the portion of the resulting matching tree $M(E_{2n+2})$ rooted from our remaining node yields a critical cell of size $2k+2$ if $n=4k+1$ and no critical cells if $n=4k+3$.

\subsection{Subcase: $\Sigma(\emptyset,\{1,2,3,\ldots,2n-2,2n\})$} Using vertex $c_{2n+2}$ as a splitting vertex makes vertex $2n+1$ a free vertex for \[\Sigma(\{c_{2n+2}\},\{1,2,3,\ldots,2n-2,2n,2n+2,c_1,c_{2n+1},c_{n+1}\}),\] leaving us to consider only the node \[\Sigma(\emptyset,\{1,2,3,\ldots,2n-2,2n,c_{2n+2}\}).\]  We now use vertex $2n+1$ as a splitting vertex, yielding two new edges to new nodes which we consider in two subsubcases.

\subsection{Subsubcase: $\Sigma(\emptyset,\{1,2,3,\ldots,2n-2,2n,c_{2n+2},2n+1\})$}\label{nOddPoint2}  By using vertex $2n-1$ as a matching vertex with respect to vertex $2n+2$, we need only consider node \[\Sigma(\{2n-1\},\{1,2,3,\ldots,2n-2,2n,c_{2n+2},2n+1,2n+2,c_{2n-1}\}).\]  Using vertex $c_{n-1}$ as a splitting vertex, we see that vertex $c_{2n+1}$ is a free vertex for \[\Sigma(\{2n-1,c_{n-1}\},\{1,2,3,\ldots,2n-2,2n,c_{2n+2},2n+1,2n+2,c_{2n-1},c_{2n},c_{n-2},c_n\}),\] hence we need only consider node \[\Sigma(\{2n-1\},\{1,2,3,\ldots,2n-2,2n,c_{2n+2},2n+1,2n+2,c_{2n-1},c_{n-1}\}).\]  Using vertex $c_{2n+1}$ as a splitting vertex, we see that vertex $c_{2n}$ is a free vertex for \[\Sigma(\{2n-1\},\{1,2,3,\ldots,2n-2,2n,c_{2n+2},2n+1,2n+2,c_{2n-1},c_{n-1},c_{2n+1}\}),\] hence we need only consider node \[\Sigma(\{2n-1,c_{2n+1}\},\{1,2,3,\ldots,2n-2,2n,c_{2n+2},2n+1,2n+2,c_{2n-1},c_{n-1},c_{2n},c_n\}).\]  Using vertex $c_{n+2}$ as a splitting vertex, we see that vertex $c_{n+1}$ is a free vertex for 
\[\Sigma \left(\left\{2n-1,c_{2n+1}\right\},\left\{
\begin{array}{l}1,2,3,\ldots,2n-2,2n,c_{2n+2},2n+1, \\
2n+2,c_{2n-1},c_{n-1},c_{2n},c_n,c_{n+2}
\end{array}\right\} \right),\]
 hence we only need to consider node 
 \[\Sigma\left(\left\{2n-1,c_{2n+1},c_{n+2}\right\},\left\{
 \begin{array}{l}
 1,2,3,\ldots,2n-2,2n,c_{2n+2},2n+1, \\
 2n+2,c_{2n-1}, c_{n-1},c_{2n},c_n,c_{n+1},c_1,c_{n+3}
 \end{array}
 \right\}\right).\]
The remaining vertices induce as a subgraph of $E_{2n+2}$ a copy of $EL_{n-5}$.  Thus, we are in a position to invoke Lemma~\ref{ELlemma}, and we see that the portion of the resulting matching tree $M(E_{2n+2})$ rooted from our remaining node yields a critical cell of size $2k+2$ if $n=4k+1$ and no critical cells if $n=4k+3$.

\subsection{Subsubcase: $\Sigma(\{2n+1\},\{1,2,3,\ldots,2n-2,2n,c_{2n+2},c_{2n+1},2n+2\})$}\label{nOddPoint3} We begin by using $c_{2n-1}$ as a matching vertex with respect to vertex $2n-1$, leaving us to consider only the node
\[\Sigma\left(\left\{2n+1,c_{2n-1}\right\},\left\{
\begin{array}{ll}
1,2,3,\ldots,2n-2,2n,c_{2n+2},c_{2n+1}, \\
2n+2,2n-1,c_{2n},c_{2n-2},c_{n-2}
\end{array}
\right\}\right).\]
We next use vertex $c_n$ as a matching vertex with respect to vertex $c_{n-1}$, leaving us to consider only the node
\[\Sigma\left(\left\{2n+1,c_{2n-1},c_n\right\},\left\{
\begin{array}{ll}
1,2,3,\ldots,2n-2,2n,c_{2n+2},c_{2n+1}, \\
2n+2,2n-1,c_{2n},c_{2n-2}, \\
c_{n-2},c_{n-1},c_{n+1}
\end{array}
\right\}\right).\]
We now use vertex $c_{n+2}$ as a splitting vertex, yielding the new nodes
\[\Sigma\left(\left\{2n+1,c_{2n-1},c_n\right\},\left\{
\begin{array}{ll}
1,2,3,\ldots,2n-2,2n,c_{2n+2},c_{2n+1}, \\
2n+2,2n-1,c_{2n},c_{2n-2},c_{n-2}, \\
c_{n-1},c_{n+1},c_{n+2}
\end{array}
\right\}\right)\]
and
\[\Sigma\left(\left\{2n+1,c_{2n-1},c_n,c_{n+2}\right\},\left\{
\begin{array}{ll}
1,2,3,\ldots,2n-2,2n,c_{2n+2},c_{2n+1}, \\
2n+2,2n-1,c_{2n},c_{2n-2},c_{n-2}, \\
c_{n-1},c_{n+1},c_1,c_{n+3}
\end{array}
\right\}\right).\]
The remaining vertices for the first of these nodes induce as a subgraph of $E_{2n+2}$ a copy of $EL_{n-5}$ while for the second of these nodes the remaining vertices induce as a subgraph of $E_{2n+2}$ a copy of $EL_{n-6}$.  Thus, we are in a position to invoke Lemma~\ref{ELlemma}, and we conclude that the portions of the resulting matching tree $M(E_{2n+2})$ rooted from these final two nodes yield a critical cell of size $2k+2$ if $n=4k+1$ and a critical cell of size $2k+3$ if $n=4k+3$.

\subsection{Summary of Case: $4\mid 2n+2$} When $n=4k+1$, there are $3$ critical cells, all of size $2k+2$.  When $n=4k+3$, there is a single critical cell of size $2k+3$.  As each of these cases yield cells of the same dimension, our resulting cell complex is a wedge of spheres, as desired.

\subsection{Case: $4\nmid 2n+2$}

We begin with the node $\Sigma(\emptyset,\emptyset)$ in $M(E_{2n+2})$ and use $1$ as a splitting vertex to create new nodes $\Sigma(\emptyset,\{1\})$ and $\Sigma(\{1\},\{2,4,\ldots,2n+2,c_1\})$.  On the latter node, we use vertex $c_3$ as a matching vertex with respect to the vertex $3$, creating a new edge to a new node $\Sigma(\{1,c_3\},\{2,4,\ldots,2n+2,c_1,3,n+4,c_5\})$.  Vertex $5$ is a free vertex for this node, yielding a final new edge to a new node labeled $\emptyset$.  The only remaining non-empty leaf node in our matching tree is $\Sigma(\emptyset,\{1\})$.

We inductively repeat this matching process on the node $\Sigma(\emptyset,\{1,2,\ldots,i\})$, where $1\leq i \leq 2n-3$.  We consider two cases.  If $i$ is odd, then using $i+1$ as a splitting vertex, we obtain new edges to new nodes labeled 
\[\Sigma(\emptyset,\{1,2,\ldots,i,i+1\})\] and 
\[\Sigma(\{i+1\},\{1,2,\ldots,i,i+2,i+4,\ldots,2n+1,c_{i+n+2}\}).\]  
For the latter node, we now use vertex $c_{i+n+4}$ as a matching vertex with respect to vertex $i+3$, yielding a new edge to a new node labeled \[\Sigma(\{i+1,c_{i+n+4}\},\{1,2,\ldots,i,i+2,i+4,\ldots,2n+1,i+n+4,c_{i+n+2},i+3,c_{i+n+6}\}).\]
Vertex $i+5$ is a free vertex for this node, yielding a new edge to a new node labeled $\emptyset$.  

If $i$ is even, then again using $i+1$ as a splitting vertex, we obtain new edges to new nodes labeled 
\[\Sigma(\emptyset,\{1,2,\ldots,i,i+1\})\]
and 
\[\Sigma(\{i+1\},\{1,2,\ldots,i,i+2,i+4,\ldots,2n+2,c_{i+1}\}).\]
For the latter node, we now use vertex $c_{i+3}$ as a matching vertex with respect to vertex $i+3$, yielding a new edge to a new node labeled \[\Sigma(\{i+1,c_{i+3}\},\{1,2,\ldots,i,i+2,i+4,\ldots,2n+2,c_{i+1},i+3,c_{i+5},n+i+4\}).\]
Vertex $i+5$ is a free vertex for this node, yielding a new edge to a new node labeled $\emptyset$.  Repeating this process for all $1\leq i\leq 2n-3$ results in a matching tree where all leaf nodes are labeled by $\emptyset$ except $\Sigma(\emptyset,\{1,2,3,\ldots,2n-2\})$, represented as the graph in Figure~\ref{nEvenPic}.

\begin{figure}[ht]
\begin{center}
\input{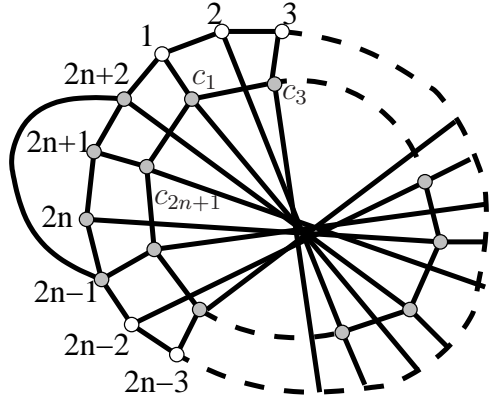}
\end{center}
\caption{We have suppressed some of the edges in the bipartite graph portion of $E_{2n+2}$ for clarity.}
\label{nEvenPic}
\end{figure}

Using vertex $c_{n+1}$ as a splitting vertex for $\Sigma(\emptyset,\{1,2,3,\ldots,2n-2\})$ leaves us with two subcases.  We must address the cases $n=6k$ and $n=6k+2$ separately from $n=6k+4$ for the first part of the splitting.

\subsection{Subcase: $\Sigma(\emptyset,\{1,2,3,\ldots,2n-2,c_{n+1}\})$, $n=6k,6k+2$}  We use vertex $2n+1$ as a splitting vertex.  First, observe that the remaining vertices for node \[\Sigma(\{2n+1\},\{1,2,3,\ldots,2n-2,c_{n+1},2n,2n+2,c_{2n+1}\})\] induce as a subgraph of $E_{2n+2}$ a disjoint union of two paths of length $\frac{n}{2}$.  Hence, applying Lemma~\ref{pathlemma}, for $n=6k$ we have that the portion of the resulting matching tree $M(E_{2n+2})$ rooted at this node yields a single critical cell of size $2k+1$.  For $n=6k+2$, it yields no critical cells.  

Regarding node $\Sigma(\emptyset,\{1,2,3,\ldots,2n-2,c_{n+1},2n+1\})$, we see that vertex $2n-1$ is a matching vertex with respect to vertex $2n+2$, hence we only need consider the node \[\Sigma(\{2n-1\},\{1,2,3,\ldots,2n-2,c_{n+1},2n+1,2n+2,2n,c_{2n-1}\}).\] The remaining vertices for this node induce as a subgraph of $E_{2n+2}$ a disjoint union of a path of length $\frac{n}{2}+1$ and a path of length $\frac{n}{2}-2$.  Thus, applying Lemma~\ref{pathlemma}, for $n=6k$ we have that the portion of the resulting matching tree $M(E_{2n+2})$ rooted at this node yields no single critical cells while for $n=6k+2$ it yields a single critical cell of size $2k+2$.

\subsection{Subcase: $\Sigma(\emptyset,\{1,2,3,\ldots,2n-2,c_{n+1}\})$, $n=6k+4$} We first use vertex $c_{n+5}$ as a matching vertex, then proceed inductively to use vertex $c_{n+5+6l}$ as a matching vertex until $n+5+6l=2n-5$.  Note that this is always possible, since $n=6k+4$ and hence the path given by $c_{n+3}<c_{n+5}<\cdots<c_{2n-3}$ is of length $3k$.  This leaves us to consider the node 
\[\Sigma(\{c_{n+5},\ldots,c_{2n-5}\},\{1,2,3,\ldots,2n-2,c_{n+1},c_{n+3},c_{n+7},\ldots,c_{2n-3}\}).\]
We use $2n$ as a splitting vertex.  For the node 
\[\Sigma\left(\{c_{n+5},\ldots,c_{2n-5}\},\left\{\begin{array}{c}1,2,3,\ldots,2n-2,c_{n+1},\\ c_{n+3},c_{n+7},\ldots,c_{2n-3},2n\end{array}\right\}\right),\]
we use $c_{n-3}$ as a matching vertex and then inductively use vertex $c_{n-3-6l}$ as a matching vertex until we match on $c_1$.  The remaining vertices for the resulting node are $c_{2n-1},2n-1,2n+2,2n+1$, which induce a path of length four.  This reduces to a leaf node labeled $\emptyset$.

The remaining node to consider is 
\[\Sigma\left(\{c_{n+5},\ldots,c_{2n-5},2n\},\left\{\begin{array}{c}1,2,3,\ldots,2n-2,c_{n+1},\\ c_{n+3},c_{n+7},\ldots,c_{2n-3},2n-1,c_{n-1},2n+1\end{array}\right\}\right).\]
It is a straightforward observation that $2n+2$ is a free vertex for this node.  Thus, this portion of the matching tree yields no critical cells.

\subsection{Subcase: $\Sigma(\{c_{n+1}\},\{1,2,3,\ldots,2n-2,2n+2,c_{n-1},c_{n+3}\})$}  We use vertex $2n$ as a splitting vertex.  First, observe that the remaining vertices for node \[\Sigma(\{c_{n+1},2n\},\{1,2,3,\ldots,2n-2,2n+2,c_{n-1},c_{n+3},2n+1,2n-1\})\] induce a path of length $n-2$.  Thus, applying Lemma~\ref{pathlemma}, for $n=6k$ we have that the portion of the resulting matching tree $M(E_{2n+2})$ rooted at this node has no non-empty leaves.  For $n=6k+2$, this portion of the tree yields a single critical cell of size $2k+2$.  For $n=6k+4$, this portion of the tree yields a single critical cell of size $2k+3$.

For node \[\Sigma(\{c_{n+1}\},\{1,2,3,\ldots,2n-2,2n+2,c_{n-1},c_{n+3},2n\}),\] we have that $c_{2n+1}$ is a matching vertex with respect to vertex $2n+1$, yielding a new edge to a new node labeled \[\Sigma(\{c_{n+1},c_{2n+1}\},\{1,2,3,\ldots,2n-2,2n+2,c_{n-1},c_{n+3},2n,c_1,c_{2n-1},2n+1\}).\]  Finally, vertex $2n-1$ is a free vertex for this node, yielding a new edge to a new node labeled $\emptyset$.

\subsection{Summary of Case: $4 \nmid 2n+2$}  Thus, for $n=6k$ we can produce a matching tree with one critical cell of size $2k+1$.  For $n=6k+2$, our matching tree has two critical cells of size $2k+2$, and for $n=6k+4$ our matching tree has one critical cell of size $2k+3$.  As each of these cases yield cells of the same dimension, our resulting cell complex is a wedge of spheres, as desired.

% Bibliography

\bibliographystyle{plain}
\bibliography{Braun}

\end{document}